\documentclass[11pt,letterpaper]{amsart}
\usepackage{amsmath, amssymb}
\usepackage[margin=1in]{geometry}
\title[Limit of random walks with geometric area tilts]{Scaling limit for line ensembles of random walks with geometric area tilts}
\author{Christian Serio}
\address{Department of Mathematics, Stanford University, Stanford, CA 94305, USA}
\email{cdserio@stanford.edu}
\newtheorem{theorem}{Theorem}

\newtheorem{lemma}{Lemma}[section]

\theoremstyle{remark}
\newtheorem{remark}{Remark}
\usepackage{verbatim}
\usepackage{hyperref}
\hypersetup{
	colorlinks   = true, 
	urlcolor     = blue, 
	linkcolor    = blue, 
	citecolor   = blue 
}

\begin{document}
	
	\maketitle
	
	\begin{abstract}
		We consider line ensembles of non-intersecting random walks constrained by a hard wall, each tilted by the area underneath it with geometrically growing pre-factors $\mathfrak{b}^i$ where $\mathfrak{b}>1$. This is a model for the level lines of the $(2+1)\mathrm{D}$ SOS model above a hard wall, which itself mimics the low-temperature 3D Ising interface. A similar model with $\mathfrak{b}=1$ and a fixed number of curves was studied by Ioffe, Velenik, and Wachtel (2018), who derived a scaling limit as the time interval $[-N,N]$ tends to infinity. Line ensembles of Brownian bridges with geometric area tilts ($\mathfrak{b}>1$) were studied by Caputo, Ioffe, and Wachtel (2019), and later by Dembo, Lubetzky, and Zeitouni (2022+). Their results show that as the time interval and the number of curves $n$ tend to infinity, the top $k$ paths converge to a limiting measure $\mu$. In this paper we address the open problem of proving existence of a scaling limit for random walk ensembles with geometric area tilts. We prove that with mild assumptions on the jump distribution, under suitable scaling the top $k$ paths converge to the same measure $\mu$ as $N\to\infty$ followed by $n\to\infty$. We do so both in the case of bridges fixed at $\pm N$ and of walks fixed only at $-N$.
	\end{abstract}

	\section{Introduction}\label{results}
	
	\subsection{Background and motivation}\label{background}
	
	The geometry of low-temperature interfaces in two- and three-dimensional lattice models has been the subject of much interest in statistical physics for several decades. In the 3D Ising model on a cylinder at low temperature with Dobrushin boundary conditions, the interface can be viewed as a two-dimensional random surface (possibly with self-intersections) which is zero on the boundary and separates $+$ and $-$ spins in the configuration. In the absence of a wall, the interface fluctuates near height zero \cite{Dob}. When conditioned to stay above a hard wall, the interface experiences \textit{entropic repulsion} and is expected to become rigid at a height diverging with the diameter $L$ of the cylinder \cite{GL}.
	
	This behavior was first confirmed in \cite{SOS0} for the $(2+1)$-dimensional solid-on-solid (SOS) model above a hard wall, an approximation of the 3D Ising interface defined by a random nonnegative height function on an $L\times L$ box in $\mathbb{Z}^2$. There it was shown that in the bulk, the surface is typically propelled to height $H(L) \sim \log L$. This was extended in \cite{SOS1, SOS2} with a detailed description of the geometry of the SOS surface: the surface is characterized by a unique ensemble of $n=H(L)$ nested contours, which can be viewed as the level lines of the surface. As $L\to\infty$ with the box rescaled to the unit square, these contours were shown to converge to a unique limit shape of infinitely many nested loops. In the bulk away from the corners of the box, these loops all lie flat against the sides of the box. 
	
	A natural question is then to study the fluctuations of the level lines from the sides of the box in the bulk. It is known from \cite{SOS2} that for the top line, these fluctuations are of order $L^{1/3+o(1)}$ on middle portions of the boundary of length linear in $L$. The level lines in this region may be approximated by $n$ ordered height functions $\psi_1 \geq \cdots \geq \psi_n \geq 0$ on $\{-L,\dots,L\}$, and it was shown via cluster expansion in \cite{SOS2} that the probabilistic weight of these random paths includes \textit{geometrically growing area tilts} of the form $\exp(-\mathfrak{b}^i L^{-1}A(\psi_i))$ for each $i$, where $\mathfrak{b}>1$ is a constant and $A(\psi_i)=\sum_j \psi_i(j)$ represents the area under $\psi_i$. We refer to \cite{SOS1, SOS2, SOS3} for further exposition on contours of the SOS model above a wall and detailed statements and proofs of these facts, and to \cite{CIW19} for further motivation of this model of random height functions with area tilts.
	
	Motivated by this analysis, in this paper we consider scaling limits as $N,n\to\infty$ of a line ensemble of $n$ random walks on an interval $[-N,N]$, conditioned to remain above zero and not to intersect one another, and each tilted by the area underneath it with a geometrically growing prefactor. We give a precise definition of this model in the next section, but before doing so we will review some previous work on similar models of SOS level lines and related two-dimensional interfaces. These models fit into the broad framework of \textit{Gibbsian line ensembles}, which have notably been studied in \cite{CH14, CH16} and numerous subsequent works on models mostly lying in the KPZ universality class. Although the physical motivations for the models we study here are somewhat different, the Gibbsian techniques developed in these works remain fundamental to the analysis.
	
	A model of the top SOS level line, consisting of a random walk above a wall tilted by its area, was studied in \cite{ISV} and shown to possess as a scaling limit the Ferrari-Spohn diffusion. Subsequently, \cite{IVW} studied a fixed number $n$ of non-intersecting random walk bridges above a wall with area tilts \textit{without} geometric pre-factors, i.e., in the case $\mathfrak{b}=1$ above. They proved convergence to an explicit scaling limit, the Dyson Ferrari-Spohn diffusion, a determinantal process of $n$ non-intersecting Ferrari-Spohn diffusions. Their argument relies on two main inputs: a mixing bound used to prove tightness, and the Karlin-McGregor formula for finite-dimensional convergence. For the latter point the choice $\mathfrak{b}=1$ is essential, as the determinantal structure is lost when $\mathfrak{b}>1$. For further discussion of Ferrari-Spohn scaling limits of interface models, we refer to the survey \cite{IV}, as well as the recent works \cite{FS, DS} which show that the Dyson Ferrari-Spohn diffusion has the Airy line ensemble as a scaling limit as $n\to\infty$.
	
	Geometrically growing area tilts with a diverging number of curves were first treated in \cite{CIW19, CIW2} for a Brownian polymer model, with the random walk bridges replaced by $n$ Brownian bridges on $[-T,T]$. Unlike in the non-geometric tilt case $\mathfrak{b}=1$, where tightness is not expected as $n\to\infty$, the additional effect of the growing prefactors in the $\mathfrak{b}>1$ case allows for a proof of tightness as $T,n\to\infty$ for the Brownian polymer with both zero and free boundary conditions (with no scaling). For zero boundary conditions, convergence to an as-yet-unidentified limiting diffusion $\mu$ was also proven using stochastic monotonicity. The arguments used in \cite{CIW19, CIW2} rely fundamentally on the scaling invariance of Brownian motion, and do not readily extend to the rescaled random walk case. Convergence to the limit $\mu$ was extended to the Brownian polymer with free boundary conditions (with $T\to\infty$ first and then $n\to\infty$) in \cite{DLZ} using a spectral theory approach. This was recently improved in \cite{CG} to allow $T,n\to\infty$ in any order; moreover, mixing (hence ergodicity) and tail estimates for the limiting process were established.
	
	In this paper, we expand upon the results above by showing that line ensembles of $n$ random walks on $[-N,N]$ with geometric area tilts converge after suitable diffusive scaling (including 1:2:3 scaling as a special case) to the same limiting diffusion $\mu$ as the Brownian polymer, as $N\to\infty$ followed by $n\to\infty$, addressing the open question posed in \cite[3.5.3]{CIW2}. We do this both for random walk bridges and for random walks which are fixed only at $-N$ and free at $N$. Our approach is essentially to prove an invariance principle for the random walk ensembles on a fixed interval towards the Brownian polymers. Then for bridges we exploit the mixing bounds provided by \cite{IVW} and the convergence result of \cite{CIW2} for the zero-boundary Brownian polymer. We deal with walks (free at $N$) by extending the mixing bounds from \cite{IVW} to this setting, and using the stochastic monotonicity results of \cite{CIW19} and the convergence of the free-boundary Brownian polymer from \cite{DLZ}.

	\subsection{Definitions}
	
	We will now define the line ensemble models we will study and review known results in greater detail. Our notation is mostly a mixture of that in \cite{DLZ} and \cite{IVW}, and in particular the definition of our model is meant to mirror that in \cite{IVW}. Throughout this paper, for a given probability measure $\mathbb{P}$ and a functional $F$, we will use $\mathbb{P}[F(X)]$ to denote the expectation of $F(X)$ with respect to $\mathbb{P}$, where $X$ is a random variable with law $\mathbb{P}$. We will not specify the law of $X$ when it is clear from context. We will be considering weak convergence of measures in the topology of uniform convergence on compact sets, which we will abbreviate by (u.c.c.). 
	
	We let $n\in\mathbb{N}$ denote the number of curves, which will be fixed unless stated otherwise, and we fix parameters $\mathfrak{a}>0$ and $\mathfrak{b} > 1$. Let
	\[
	\mathbb{A}_n^+ = \{\underline{x}\in\mathbb{R}^n : x_1 > \cdots > x_n > 0\}, \quad \mathbb{A}_n^0 = \{\underline{x}\in\mathbb{R}^n : x_1 \geq \cdots \geq x_n \geq 0\}
	\]
	denote the open and closed Weyl chambers in $\mathbb{R}^n$. 
	
	Let $(p_z)_{z\in\mathbb{Z}}$ be an irreducible random walk kernel with mean 0, variance 1, and finite exponential moments as in \cite{IVW} for simplicity (although this last assumption is probably not strictly necessary). For $u,v,M,N\in\mathbb{N}$ with $M<N$, let $\mathbb{P}^u_M$ denote the law of a random walk on $\{M,M+1,\dots\}$ with kernel $p$ starting at $u$ at time $M$, and let $\mathbb{P}^{u,v}_{M,N}$ denote the law of a random walk bridge on $\{M,\dots,N\}$ with kernel $p$ starting at $u$ at $M$ and ending at $v$ at $N$. For $\underline{u},\underline{v}\in\mathbb{N}^n$, let $\mathbb{P}^{\underline{u}}_M$ denote the law $\mathbb{P}_M^{u_1}\otimes\cdots\otimes\mathbb{P}_M^{u_n}$, and let $\mathbb{P}^{\underline{u},\underline{v}}_{M,N}$ denote $\mathbb{P}^{u_1,v_1}_{M,N} \otimes \cdots \otimes \mathbb{P}^{u_n,v_n}_{M,N}$.
	
	Fix a family of potentials $V_\lambda : [0,\infty) \to [0,\infty)$ for $\lambda > 0$ which are continuous, monotone increasing, and satisfy $V_\lambda(0) = 0$, $\lim_{x\to\infty} V_\lambda(x) = \infty$. For the main results we will work with the most relevant case of the linear potential $V_\lambda(x) = \lambda x$, but some of our auxiliary results hold in this more general setting, and we expect the main results to extend. For $n$ trajectories $X_i = (X_i(M), \dots, X_i(N))\in \mathbb{N}^{N-M+1}$, $1\leq i\leq n$, define the area functional
	\[
	\mathcal{A}^\lambda_{M,N}(\underline{X}) = \mathfrak{a} \sum_{i=1}^n \mathfrak{b}^{i-1} \sum_{j=M}^{N-1} V_\lambda(X_i(j)).
	\]
	Let $\Omega_{M,N}^n(\underline{X})$ denote the event that $\underline{X}(j) = (X_1(j),\dots,X_n(j)) \in \mathbb{A}_n^+$ for all $j\in\{M,\dots,N\}$. We then define the line ensemble measures $\mathbb{P}^{\underline{u}}_{M,N,+,\lambda}$ (walks) and $\mathbb{P}^{\underline{u},\underline{v}}_{M,N,+,\lambda}$ (bridges) via
	\begin{align}
	\label{walk} \mathbb{P}^{\underline{u}}_{M,N,+,\lambda}[F(\underline{X})] &= \frac{1}{Z^{\underline{u}}_{N,+,\lambda}} \mathbb{P}^{\underline{u}}_M \left[ F(\underline{X}) \mathbf{1}_{\Omega_{M,N}^n(\underline{X})} e^{-\mathcal{A}^\lambda_{M,N}(\underline{X})} \right],\\
	\label{bridge} \mathbb{P}^{\underline{u},\underline{v}}_{M,N,+,\lambda}[F(\underline{X})] &= \frac{1}{Z^{\underline{u},\underline{v}}_{N,+,\lambda}} \mathbb{P}^{\underline{u},\underline{v}}_{M,N} \left[ F(\underline{X}) \mathbf{1}_{\Omega_{M,N}^n(\underline{X})} e^{-\mathcal{A}^\lambda_{M,N}(\underline{X})} \right],
	\end{align}
	for any bounded functional $F$ on $\mathbb{N}^{N-M+1}$, with the partition functions 
	\[
	Z^{\underline{u}}_{M,N,+,\lambda} = \mathbb{P}^{\underline{u}}_M \left[ \mathbf{1}_{\Omega_{M,N}^n(\underline{X})} e^{-\mathcal{A}^\lambda_{M,N}(\underline{X})} \right],\quad Z^{\underline{u},\underline{v}}_{M,N,+,\lambda} = \mathbb{P}^{\underline{u}}_{M,N} \left[ \mathbf{1}_{\Omega_{M,N}^n(\underline{X})} e^{-\mathcal{A}^\lambda_{M,N}(\underline{X})} \right].
	\]
	We will most often consider the symmetric case $M=-N$ (although all results readily extend to the asymmetric case), in which case we omit the first subscript and write $\mathbb{P}^{\underline{u}}_{N,+,\lambda}$, etc., for brevity. 

	\begin{remark}\label{gibbs}
	Let us note an important property of these line ensemble measures, the so-called \textit{Gibbs property}. For any integers $M<K<L<N$, it is easy to see by splitting the sum in the area tilt that, conditional on the values of $\underline{X}(j)$ for each $j\in\{-N,\dots,N\}\setminus\{K+1,\dots,L-1\}$, the law of $\underline{X}|_{\{K,\dots,L\}}$ under both $\mathbb{P}^{\underline{u}}_{M,N,+,\lambda}$ and $\mathbb{P}^{\underline{u},\underline{v}}_{M,N,+,\lambda}$ is simply $\mathbb{P}^{\underline{X}(K),\underline{X}(L)}_{K,L,+,\lambda}$. In particular the conditional law only depends on $\underline{X}(K)$ and $\underline{X}(L)$. This property and its Brownian analogue are key tools in the arguments used in \cite{IVW, CIW2} to prove mixing, tightness, and convergence for zero boundary conditions, all of which we use here. The Gibbs property was first used systematically to prove limiting results for line ensembles in \cite{CH14, CH16}; we refer to these papers for further exposition. For the line ensembles we consider here, the Gibbsian structure is used in the proof of the mixing result Theorem \ref{mixing}.
	\end{remark}
	
	Now we establish the scaling we will use. For $\lambda>0$, let $H_\lambda>0$ be the unique number satisfying $H_\lambda^2 V_\lambda(H_\lambda) = 1$. We assume that $\lim_{\lambda\downarrow 0} H_\lambda = \infty$, and that there exist $\lambda_0$ and a continuous non-decreasing function $q_0\geq 0$ on $(0,\infty)$ with $\lim_{r\to\infty} q_0(r) = \infty$ such that for all $\lambda\leq\lambda_0$, 
	\begin{equation}\label{q0}
	H_\lambda^2 V_\lambda(rH_\lambda) \geq q_0(r). 
	\end{equation}
	In particular, in the case $V_\lambda(x) = \lambda x$, we have $H_\lambda = \lambda^{-1/3}$ and we may take $q_0(r) =  r$.
	
	We write $h_\lambda = H_\lambda^{-1}$, $\mathbb{N}_\lambda = h_\lambda\mathbb{N}$, $\mathbb{A}_{n,\lambda}^+ = \mathbb{A}_n^+ \cap \mathbb{N}_\lambda$, and $\mathbb{Z}_\lambda = h_\lambda^2\mathbb{Z}$. For $t\in h_\lambda^2\mathbb{Z}$, define the rescaling 
	\begin{equation}\label{rescaling}
	\underline{x}^\lambda(t) = h_\lambda \underline{X}(H_\lambda^2 t),
	\end{equation}
	and extend to $t\in\mathbb{R}$ by linear interpolation. We will now adjust our notation to this scaling as follows. For $\underline{u},\underline{v}\in\mathbb{A}_{n,\lambda}^+$, $a,b\in\mathbb{Z}_\lambda$ with $a<b$, and $T>0$, we write for brevity
	\[
	\mathbb{P}^{\underline{u}}_{a,b,+,\lambda} := \mathbb{P}^{H_\lambda\underline{u}}_{H_\lambda^2 a, H_\lambda^2 b,+, \lambda}, \quad \mathbb{P}^{\underline{u},\underline{v}}_{a,b,+,\lambda} := \mathbb{P}^{H_\lambda\underline{u},H_\lambda\underline{v}}_{H_\lambda^2 a, H_\lambda^2 b,+,\lambda},
	\]
	and likewise for the partition functions. This should not create any confusion since $\underline{u},\underline{v},a,b$ are generally not integer-valued and this is the only reasonable way to interpret the notation. We write $\mathbb{P}^{\underline{u};T}_{a,b,+,\lambda}$ and $\mathbb{P}^{\underline{u},\underline{v};T}_{a,b,+,\lambda}$ to denote the laws of $\underline{x}^\lambda$ restricted to $[-T,T]$ under these measures. As before if $a=-b$ we omit the first subscript. We will write $\mathcal{A}^\lambda_a(\underline{x}^\lambda) = \mathcal{A}^\lambda_{H_\lambda^2 a}(\underline{X})$; that is, in terms of the rescaled process,
	\begin{equation}\label{arearescaled}
	\mathcal{A}^\lambda_a(\underline{x}^\lambda) = \mathfrak{a}\sum_{i=1}^n \mathfrak{b}^{i-1}\sum_{j=-H_\lambda^2 a}^{H_\lambda^2a - 1} V_\lambda(H_\lambda x_i^\lambda(h_\lambda^2j)).
	\end{equation}
	Finally, we establish notation for the Brownian polymers which will serve as the scaling limits. We let $\mathbf{B}^{\underline{u},\underline{v}}_M$ denote the \textit{unnormalized} path measure of $n$ independent Brownian bridges on $[-M,M]$ with boundary conditions $\underline{u}$ at $-M$ and $\underline{v}$ at $M$, with total mass $(4\pi M)^{-d/2}\exp(-\lVert\underline{u}-\underline{v}\rVert^2/4M)$. For $\underline{u},\underline{v}\in\mathbb{A}_n^+$, define $\mathbb{P}^{\underline{u},\underline{v}}_{M,+,0}$ by 
	\begin{equation}\label{bpoly}
	\mathbb{P}^{\underline{u},\underline{v}}_{M,+,0} \left[F(\underline{x})\right] = \frac{1}{Z^{\underline{u},\underline{v}}_{M,+,0}} \mathbf{B}_M^{\underline{u},\underline{v}} \left[F(\underline{x})\mathbf{1}_{\Omega^n_M(\underline{x})}e^{-\mathcal{A}_M(\underline{x})}\right],
	\end{equation}
	where $F$ is any functional on $C([-M,M],\mathbb{R}^n)$, $\Omega^n_M(\underline{x}) = \{\underline{x}(t)\in\mathbb{A}_n^+ \mbox{ for all } t\in[-M,M]\}$, and 
	\[
	\mathcal{A}_M(\underline{x}) = \mathfrak{a}\sum_{i=1}^n \mathfrak{b}^{i-1}\int_{-M}^M x_i(t)\,dt.
	\]
	It is known \cite{CIW19, DLZ} by stochastic monotonicity that the zero boundary condition measures 
	\begin{equation}\label{zerobc}
	\mathbb{P}^{\underline{0},\underline{0}}_{M,+,0} := \lim_{\epsilon,\eta \, \downarrow \, 0} \mathbb{P}^{\epsilon\underline{x},\eta\underline{y}}_{M,+,0}
	\end{equation}
	exist and are independent of $\underline{x}$ and $\underline{y}$, and converge weakly (u.c.c.) as $M\to\infty$ to a measure $\mu_n$. Moreover, the free boundary condition measures $\mathbb{P}_{M,+,0}$ given by 
	\begin{equation}\label{freebpoly}
	\mathbb{P}_{M,+,0} \left[F(\underline{x})\right] = \frac{1}{Z_{M,+,0}}\int_{\mathbb{A}_n^+}\int_{\mathbb{A}_n^+} \mathbf{B}_M^{\underline{u},\underline{v}} \left[F(\underline{x})\mathbf{1}_{\Omega_M^n(\underline{x})} e^{-\mathcal{A}_M(\underline{x})}\right]d\underline{u}\,d\underline{v}
	\end{equation}
	are well-defined \cite[Appendix A]{CIW19}. It was proven in \cite[Theorem 1.1]{DLZ} that  these measures converge as $M\to\infty$ to the same limit $\mu_n$. Thus
	\begin{equation}\label{mun}
		\mu_n := \lim_{M\to\infty} \mathbb{P}^{\underline{0},\underline{0}}_{M,+,0} = \lim_{M\to\infty} \mathbb{P}_{M,+,0}, \quad \mbox{(u.c.c.)}
	\end{equation}
	We write $\mu_n^T$ for the restriction of $\mu_n$ to $[-T,T]$.
	
	In this paper we will also consider the similarly defined law $\mathbb{P}^{\underline{u}}_{M,+,0}$, namely that of $n$ Brownian motions on $[-M,M]$ starting at $\underline{u}$ at $-M$ with the same conditioning and area tilt. Because of the choice of normalization of $\mathbf{B}_M^{\underline{u},\underline{v}}$, this is equivalent to
	\begin{equation}\label{freemix}
	\mathbb{P}^{\underline{u}}_{M,+,0} \left[F(\underline{x})\right] = \frac{1}{Z^{\underline{u}}_{M,+,0}}\int_{\mathbb{A}_n^+} \mathbf{B}_M^{\underline{u},\underline{v}} \left[F(\underline{x})\mathbf{1}_{\Omega_M^n(\underline{x})} e^{-\mathcal{A}_M(\underline{x})}\right]d\underline{v}.
	\end{equation}
	It is not hard to see from the results of \cite{CIW2, DLZ} that $\mathbb{P}^{\underline{0}}_{M,+,0} := \lim_{\epsilon\downarrow 0}\mathbb{P}^{\epsilon\underline{x}}_{M,+,0}$ exists and converges as $M\to\infty$ to the same measure $\mu_n$; we will prove this in Section \ref{convpf}.
	
	Finally, we are interested in sending the number of curves $n$ to infinity. To emphasize that $n$ is no longer fixed, we will add a subscript of $n$ to the measures to indicate a growing number of curves. By \cite[Theorem 1.5]{CIW2}, the limiting law $\mu_n$ of the $n$-curve Brownian polymer itself has a weak limit (u.c.c.) as $n\to\infty$, which we denote by $\mu$. Thus
	\begin{equation}\label{mu}
		\mu := \lim_{n\to\infty}\lim_{M\to\infty} \mathbb{P}^{\underline{0},\underline{0}}_{n,M,+,0}, \quad \mbox{(u.c.c.)}
	\end{equation}
	In fact, the two limits can be taken in either order. For $T>0$ and $k\leq n$ fixed, write $\mathbb{P}^{\underline{u};T,k}_{n,a_N,+,\lambda_N}$, $\mathbb{P}^{\underline{u},\underline{v};T,k}_{n,a_N,+,\lambda_N}$, and $\mu^{T,k}$ for the corresponding laws restricted to $[-T,T]$ and the top $k$ curves.

	\subsection{Main results} Our first result proves that if $n$ is fixed and $N\to\infty$, then both the walk and bridge line ensemble measures converge after rescaling to the limiting law $\mu_n$ in \eqref{mun} of the Brownian polymer with $n$ curves.
	
	\begin{theorem}\label{main}
		Assume $V_\lambda(x) = \lambda x$. Let $\lambda_N$ be a sequence satisfying
		\[
		\lim_{N\to\infty} \lambda_N = 0, \qquad \lim_{N\to\infty} a_N := \lim_{N\to\infty} h_{\lambda_N}^2 N = \infty.
		\]
		Then for any $n\in\mathbb{N}$ and any bounded sequences $\underline{u}_N,\underline{v}_N\in\mathbb{A}_{n,\lambda_N}^+$, the measures $\mathbb{P}^{\underline{u}_N}_{a_N,+,\lambda_N}$ and $\mathbb{P}^{\underline{u}_N,\underline{v}_N}_{a_N,+,\lambda_N}$ both converge weakly $($u.c.c.$)$ to $\mu_n$ as $N\to\infty$. More precisely, for any $C\in(0,\infty)$ and $T>0$, uniformly in $u_{N,1},v_{N,1}\leq C$, we have the weak limits in the uniform topology
		\begin{align*}
			\lim_{N\to\infty}\mathbb{P}^{\underline{u}_N;T}_{a_N,+,\lambda_N}  = \lim_{N\to\infty}\mathbb{P}^{\underline{u}_N,\underline{v}_N;T}_{a_N,+,\lambda_N} = \mu_n^T.
		\end{align*}
	\end{theorem}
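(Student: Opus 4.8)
The plan is to prove a quenched invariance principle for the rescaled random walk line ensembles on a \emph{fixed} time interval $[-T,T]$, converging to the Brownian polymer $n$-curve measure with the appropriate boundary conditions, and then to let the boundary data escape to infinity (resp. to match $\mu_n$) using the known convergence \eqref{mun} of the Brownian polymers together with the Gibbs property and the mixing bounds of \cite{IVW}. First I would treat the bridge case $\mathbb{P}^{\underline{u}_N,\underline{v}_N}_{a_N,+,\lambda_N}$. Fix $T>0$ and $S>T$. By Donsker's theorem for bridges, the law of $\underline{x}^{\lambda_N}$ restricted to $[-S,S]$ under the \emph{free} (untilted, unconstrained) bridge measure $\mathbb{P}^{H_{\lambda_N}\underline{u}_N,H_{\lambda_N}\underline{v}_N}_{-H_{\lambda_N}^2 a_N, H_{\lambda_N}^2 a_N}$ converges weakly (u.c.c.) to $n$ independent Brownian bridges on $[-a_N,a_N]$ — but since $a_N\to\infty$, after localizing to $[-S,S]$ the relevant statement is convergence of the finite-interval marginals; one restricts to $[-S,S]$ via the Gibbs/Markov property, so that the reference measure on $[-S,S]$ is $\mathbb{P}^{\underline{x}^{\lambda_N}(-S),\,\underline{x}^{\lambda_N}(S)}_{-S,S}$ with random endpoints whose law converges. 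The constrained, area-tilted measure on $[-S,S]$ is obtained from this reference measure by reweighting by $\mathbf{1}_{\Omega^n_S}\exp(-\mathcal{A}^{\lambda_N}_S)$ and by a boundary term accounting for the conditioning outside $[-S,S]$. The key analytic input is that $\mathcal{A}^{\lambda_N}_S(\underline{x}^{\lambda_N}) \to \mathcal{A}_S(\underline{x})$ — the Riemann sum \eqref{arearescaled} converges to the integral $\mathfrak{a}\sum_i \mathfrak{b}^{i-1}\int_{-S}^S x_i(t)\,dt$ — uniformly on compacts, which follows from u.c.c. convergence of the paths and continuity of $V_{\lambda_N}(H_{\lambda_N}\,\cdot\,) = \lambda_N H_{\lambda_N}(\cdot) = H_{\lambda_N}^{-2}(\cdot)$ in the linear case, so that the Riemann sum has mesh $h_{\lambda_N}^2\to 0$ against the integrand $x_i^{\lambda_N}$.

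The passage from the reweighted reference measure to the weak limit requires showing that the indicator $\mathbf{1}_{\Omega^n_S}$ is a continuity set for the limiting (free-bridge-plus-tilt) measure — i.e. that the limit puts no mass on paths touching the wall or the chamber boundary on $[-S,S]$ — and that the partition functions $Z$ converge; both are standard given that the limit is absolutely continuous with respect to a non-degenerate Gaussian and the entrance law into the chamber is regular (one invokes that Brownian motion does not linger on lower-dimensional sets, plus a uniform-integrability argument for the exponential tilt using the finite exponential moments of $p$ to control the reference-measure tails, guaranteeing no escape of mass). This yields $\mathbb{P}^{\underline{u}_N,\underline{v}_N;S}_{a_N,+,\lambda_N} \Rightarrow \mathbb{P}^{\underline{w},\underline{z}}_{S,+,0}$-type limits where $\underline{w},\underline{z}$ are the (random, converging) restrictions at $\pm S$; more carefully, by the Gibbs property the conditional law on $[-T,T]$ given $\underline{x}(\pm S)$ is exactly $\mathbb{P}^{\underline{x}(-S),\underline{x}(S)}_{S,+,0}$ in the limit, while on the discrete side it is $\mathbb{P}^{\underline{x}^{\lambda_N}(-S),\underline{x}^{\lambda_N}(S)}_{S,+,\lambda_N}$, to which the fixed-interval invariance principle applies.

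Finally I would remove the dependence on the boundary data. The mixing bound of Theorem~\ref{mixing} (applied in the scaled variables) gives that the law on $[-T,T]$ under $\mathbb{P}^{\underline{u}_N,\underline{v}_N;T}_{a_N,+,\lambda_N}$ is, up to an error tending to $0$ as $S\to\infty$ uniformly in the boundary data with $u_{N,1},v_{N,1}\le C$ and uniformly in $N$ large, insensitive to the boundary values at $\pm a_N$ — this is exactly the role of the geometric prefactors $\mathfrak{b}^{i-1}$, which force rapid relaxation. Combining: for $S$ large the scaled measure on $[-T,T]$ is close to $\mathbb{P}^{\underline{0},\underline{0};T}_{S,+,0}$ (the zero-boundary Brownian polymer on $[-S,S]$ restricted to $[-T,T]$), which by \eqref{mun} converges to $\mu_n^T$ as $S\to\infty$; a diagonal argument in $N,S$ closes the estimate. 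The walk case $\mathbb{P}^{\underline{u}_N}_{a_N,+,\lambda_N}$ is handled identically, using the free right endpoint: one still restricts to $[-S,S]$, and the same mixing argument — which for the free-endpoint model is the extension of \cite{IVW} stated as Theorem~\ref{mixing}, together with stochastic monotonicity from \cite{CIW19} — shows the boundary data at $-a_N$ (and the free end at $a_N$) wash out, reducing to $\mathbb{P}^{\underline{0}}_{S,+,0}$ and hence to $\mu_n^T$ via \eqref{freemix}. I expect the main obstacle to be the \emph{uniform} (in $N$ and in the boundary data) control of the partition function ratios and of the no-escape-of-mass for the exponential tilt: one must show the area tilt does not conspire with the large deviations of the underlying walk to shift mass in a way that survives the limit, which is where the finite-exponential-moment hypothesis on $p$ and a careful comparison of $Z^{\underline{u}_N,\underline{v}_N}_{a_N,+,\lambda_N}$ with its Brownian counterpart (uniformly over the relevant endpoint range) are essential.
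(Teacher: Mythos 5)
Your proposal follows essentially the same route as the paper: a fixed-window invariance principle to the Brownian polymer (the paper's Lemma \ref{Mconv}, with the same Riemann-sum treatment of the area tilt and the same concern about the wall/chamber indicator being a continuity set), combined with the mixing bound of Theorem \ref{mixing} to wash out the boundary data, the known convergence \eqref{mun}, and stochastic monotonicity from \cite{CIW19} for the free-endpoint case (the paper's Lemma \ref{freeinv}). Two small simplifications in the paper worth noting: since $\mathcal{A}\ge 0$ the tilt $e^{-\mathcal{A}}\le 1$, so bounded convergence suffices and no uniform-integrability or exponential-moment argument is needed for the reweighting; and rather than conditioning on random endpoints at $\pm S$, the paper applies the mixing bound to replace the entire measure by one on $[-M_N,M_N]$ with deterministic endpoints $\epsilon\underline{w}$, sending $\epsilon\downarrow 0$ only after $N\to\infty$ to avoid a degenerate invariance principle at zero boundary data.
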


	As a corollary, we obtain convergence for $N\to\infty$ followed by the number of curves $n\to\infty$, with suitably chosen boundary conditions, to the limiting law $\mu$ in \eqref{mu} of the Brownian polymer with a growing number of curves.
	
	\begin{theorem}\label{infcurves}
		Assume the hypotheses of Theorem \ref{main}. Let $\underline{u}_N^n, \underline{v}_N^n \in \mathbb{A}_{n,\lambda_N}^+$ be sequences such that for each fixed $n\in\mathbb{N}$, $(\underline{u}_N^n)_{N\geq 1}$ and $(\underline{v}_N^n)_{N\geq 1}$ are bounded. Then the measures $\mathbb{P}^{\underline{u}_N^n}_{n,a_N,+,\lambda_N}$ and $\mathbb{P}^{\underline{u}_N^n,\underline{v}_N^n}_{n,a_N,+,\lambda_N}$ both converge weakly $($u.c.c.$)$ to $\mu$ if $N\to\infty$ first and then $n\to\infty$. That is, for any $T>0$ and $k\in\mathbb{N}$ we have the weak limits in the uniform topology
		\begin{align*}
			\lim_{n\to\infty}\lim_{N\to\infty}\mathbb{P}^{\underline{u}_N^n;T,k}_{n,a_N,+,\lambda_N} = \lim_{n\to\infty}\lim_{N\to\infty}\mathbb{P}^{\underline{u}_N^n,\underline{v}_N^n;T,k}_{n,a_N,+,\lambda_N} = \mu^{T,k}.
		\end{align*}
	\end{theorem}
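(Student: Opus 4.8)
The plan is to deduce Theorem \ref{infcurves} from Theorem \ref{main} together with the known convergence $\mu_n\to\mu$ recorded in \eqref{mu} by a routine iterated-limit argument; essentially all of the analytic work is already contained in Theorem \ref{main}. Fix $T>0$ and $k\in\mathbb{N}$, and consider only $n\geq k$. For each such $n$ the sequences $(\underline{u}_N^n)_{N\geq1}$ and $(\underline{v}_N^n)_{N\geq1}$ are bounded by hypothesis, say by a constant $C_n$, so Theorem \ref{main} (applied with this value of $n$, where the measures $\mathbb{P}^{\underline{u}_N^n;T}_{n,a_N,+,\lambda_N}$ and $\mathbb{P}^{\underline{u}_N^n;T}_{a_N,+,\lambda_N}$ agree) gives the weak limits in the uniform topology on $C([-T,T],\mathbb{R}^n)$
\[
\lim_{N\to\infty}\mathbb{P}^{\underline{u}_N^n;T}_{n,a_N,+,\lambda_N} = \lim_{N\to\infty}\mathbb{P}^{\underline{u}_N^n,\underline{v}_N^n;T}_{n,a_N,+,\lambda_N} = \mu_n^T,
\]
for both the walk and the bridge ensembles.

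Next I would push these weak limits forward through the coordinate projection $\pi_k\colon C([-T,T],\mathbb{R}^n)\to C([-T,T],\mathbb{R}^k)$ onto the top $k$ curves. Since $\pi_k$ is continuous, the continuous mapping theorem gives, for every fixed $n\geq k$,
\[
\lim_{N\to\infty}\mathbb{P}^{\underline{u}_N^n;T,k}_{n,a_N,+,\lambda_N} = \lim_{N\to\infty}\mathbb{P}^{\underline{u}_N^n,\underline{v}_N^n;T,k}_{n,a_N,+,\lambda_N} = \mu_n^{T,k},
\]
where $\mu_n^{T,k}$ denotes the law of the top $k$ curves of $\mu_n$ restricted to $[-T,T]$. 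Now I invoke \cite[Theorem 1.5]{CIW2}, i.e.\ \eqref{mu}, which gives $\mu_n\to\mu$ weakly (u.c.c.) as $n\to\infty$; restricting again to $[-T,T]$ and the top $k$ coordinates yields $\mu_n^{T,k}\to\mu^{T,k}$ weakly in the uniform topology. Combining with the previous display, the iterated limits
\[
\lim_{n\to\infty}\lim_{N\to\infty}\mathbb{P}^{\underline{u}_N^n;T,k}_{n,a_N,+,\lambda_N} = \lim_{n\to\infty}\mu_n^{T,k} = \mu^{T,k} = \lim_{n\to\infty}\lim_{N\to\infty}\mathbb{P}^{\underline{u}_N^n,\underline{v}_N^n;T,k}_{n,a_N,+,\lambda_N}
\]
exist and equal $\mu^{T,k}$, which is the assertion of the theorem.

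There is no genuine obstacle in this last step, and in particular no new tightness estimate is needed, since both of the limits being composed are already established. The one point requiring care is that the $N\to\infty$ convergence in Theorem \ref{main} is genuine weak convergence of laws on the path space $C([-T,T],\mathbb{R}^n)$, not merely convergence of finite-dimensional distributions, so that composition with the continuous projection and restriction maps is legitimate; this is built into the statement of Theorem \ref{main}. I would also note that the order of limits---$N\to\infty$ first, then $n\to\infty$---is dictated by the method, since exchanging them or taking them simultaneously would require an invariance principle uniform in the number of curves, which the argument behind Theorem \ref{main} does not provide.
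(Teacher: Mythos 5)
Your argument is correct and is essentially identical to the paper's proof: for each fixed $n$ apply Theorem \ref{main} with the bounded boundary data, pass to the top $k$ curves by testing against functionals of the form $F\circ\pi_k$ (the continuous mapping theorem), and then invoke \eqref{mu} to send $n\to\infty$. No further comment is needed.
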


	Together, Theorems \ref{main} and \ref{infcurves} address the open problem posed in \cite[3.5.3]{CIW2}. We note that there is no claim of uniformity with respect to boundary conditions in Theorem \ref{infcurves}. We make three remarks on these theorems before stating the final main result.
	
	\begin{remark}
	For concreteness, in Theorems \ref{main} and \ref{infcurves} one can take $\lambda_N = N^{-1}$. In this case the scaling in \eqref{rescaling} is given by $h_{\lambda_N} = N^{-1/3}$ in space and $H_{\lambda_N}^2 = N^{2/3}$ in time, i.e., diffusive 1:2:3 scaling. This agrees with the cube-root fluctuations described for SOS level lines in Section \ref{background}.
	\end{remark}

	\begin{remark}
	The order of limits taken in Theorem \ref{infcurves}, $N\to\infty$ followed by $n\to\infty$, is the same as in \cite[Theorem 1.1]{DLZ} for the Brownian polymer with free boundary conditions. On the other hand for the zero boundary condition Brownian polymer, \cite[Theorem 1.5]{CIW2} shows that the limits can be taken in any order. Their argument essentially amounts to showing that the top $k$ curves on $[-T,T]$ are stochastically increasing with $N$ and $n$. This is no longer true in our case since the walks are rescaled depending on $N$, unlike the Brownian polymer which has no rescaling. It would be interesting to prove a modification of Theorem \ref{infcurves} which allows $n$ to grow with $N$ at a sufficiently slow rate, but the mixing methods we use in this paper do not readily appear to accomplish this.
	\end{remark}
	
	\begin{remark}
		Although we only prove Theorems \ref{main} and \ref{infcurves} in the linear potential case, we expect them to hold more generally if one assumes, as in \cite{IVW}, that there is a nonnegative continuous function $q$ on $(0,\infty)$ such that $H_\lambda^2 V_\lambda(rH_\lambda) \to q(r)$ uniformly on compact sets as $\lambda\downarrow 0$. Our argument would apply in this case if we knew that the analogue of the Brownian polymer with the nonlinear area tilts $\exp(-\mathfrak{ab}^{i-1}\int_{-M}^M q(x_i(t))\,dt)$ converges as $M,n\to\infty$ to some analogue of $\mu$. This could likely be achieved by modifying the arguments in \cite{DLZ} given some mild assumptions on $q$, but we do not attempt to do so here as we believe the linear area case is the most relevant.
	\end{remark}

	The proof of Theorems \ref{main} and \ref{infcurves}, which we give in Section \ref{convpf}, will rely on the following mixing result, which is an analogue of \cite[Theorem 3.3]{IVW}.
	
		\begin{theorem}\label{mixing}
		For any $n\in\mathbb{N}$, $C\in(0,\infty)$, $T>0$, there exist $c_1,c_2>0$ such that for any $K>0$,
		\begin{align}
			\label{freemixing} \left\lVert \mathbb{P}^{\underline{r};T}_{a,+,\lambda} - \mathbb{P}^{\underline{w};T}_{b,+,\lambda}\right\rVert_{\mathrm{var}} \leq c_1 e^{-c_2 K},\\
			\label{bridgemixing} \left\lVert \mathbb{P}^{\underline{r},\underline{s};T}_{a,+,\lambda} - \mathbb{P}^{\underline{w},\underline{z};T}_{b,+,\lambda}\right\rVert_{\mathrm{var}} \leq c_1 e^{-c_2K},
		\end{align}
		uniformly in $\lambda$ small, $a,b\in\mathbb{Z}_\lambda$ with $a,b\geq K+T$, and $\underline{r},\underline{s},\underline{w},\underline{z}\in\mathbb{A}_{n,\lambda}^+$ with $r_1,s_1,w_1,z_1\leq C$.
	\end{theorem}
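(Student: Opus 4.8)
The plan is to prove \eqref{freemixing} and \eqref{bridgemixing} by adapting the strategy of \cite[Theorem 3.3]{IVW}, which proceeds by a coupling argument exploiting the Gibbs property (Remark \ref{gibbs}). Fix $n$, $C$, $T$ as in the statement. The key point is that the law of $\underline{x}^\lambda$ on $[-T,T]$ under any of these measures depends on the rest of the profile only through the values $\underline{X}(\pm H_\lambda^2 S)$ for some intermediate scale $T < S < \min(a,b)$; by the Gibbs property, conditionally on these the restricted law is $\mathbb{P}^{\underline{X}(-H_\lambda^2 S),\underline{X}(H_\lambda^2 S)}_{-S,S,+,\lambda}$ on $[-S,S]$, and then further restricts to $[-T,T]$. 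Therefore, to compare two of the measures it suffices to (i) construct a coupling of the two ensembles on $[-S,S]$ that, with probability at least $1-c_1 e^{-c_2 K}$, agrees exactly on $[-T,T]$; the natural device is to couple the boundary data at an intermediate scale and then use a monotone coupling (stochastic monotonicity of these measures in the boundary values, cf. \cite{CIW19,IVW}) together with an entropic-repulsion/barrier estimate forcing the top curve down to order-one height and the bottom curve up away from the wall on the segment $[-S,S]\setminus[-T,T]$, so the two profiles are squeezed between common barriers and can be made to coincide on $[-T,T]$.

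The concrete steps I would carry out, in order, are: \emph{Step 1.} Establish a priori one-point (and then uniform-on-compacts) bounds: under $\mathbb{P}^{\underline{u}}_{a,+,\lambda}$ or $\mathbb{P}^{\underline{u},\underline{v}}_{a,+,\lambda}$ with $u_1,v_1\leq C$, the rescaled top curve $x_1^\lambda$ does not exceed height $R$ on $[-T-1,T+1]$ except with probability $\le c_1 e^{-c_2 R}$, and the bottom curve $x_n^\lambda$ is not within $\delta$ of the wall except with probability going to $0$ as $\delta\downarrow 0$, all uniformly in $\lambda$ small and the boundary data. These follow from the area tilt (which penalizes large excursions at an exponential rate in the area swept) together with the random-walk analogues of the barrier estimates in \cite{IVW,CIW19}; the geometric prefactors $\mathfrak b^{i-1}$ only help here. \emph{Step 2.} Use Step 1 at scale $S\asymp K$ (say $S=K/2+T$) to show that with probability $1-c_1e^{-c_2K}$ the boundary data $\underline{X}(\pm H_\lambda^2 S)$ lie in a fixed compact set of $\mathbb{A}_n^+$ bounded away from the wall. \emph{Step 3.} On the complementary high-probability event, run a monotone coupling: sandwich both ensembles between the ensemble started from the minimal admissible boundary data and the one started from the maximal (height-$R$) boundary data, and show these two extremal ensembles couple to agree on $[-T,T]$ with probability $1-c_1e^{-c_2K}$. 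This last coupling is again obtained via the Gibbs property and a barrier argument: on $[-S,-T]$ and $[T,S]$ the two extremal profiles are both pushed into a common thin window, after which one couples their values at $\pm T$ and applies the Gibbs property on $[-T,T]$. \emph{Step 4.} Combine: the total variation distance between any two of the measures on $[-T,T]$ is bounded by the sum of the failure probabilities in Steps 2–3, which is $\le c_1 e^{-c_2 K}$. The bridge case \eqref{bridgemixing} is identical, carrying the extra endpoint data $\underline{v}$ through; the walk case \eqref{freemixing} additionally requires the endpoint-integrated versions, handled as in the extension of the \cite{IVW} mixing bounds promised in the introduction.

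The main obstacle, compared to the Brownian setting of \cite{CIW19} or the non-geometric random walk setting of \cite{IVW}, is that here we must run all these coupling and barrier estimates \emph{uniformly in the scaling parameter $\lambda$}, for the prelimiting rescaled random walks rather than for Brownian motion. The barrier and entropic-repulsion estimates in \cite{IVW} are proved for a fixed kernel $p$ and fixed (unscaled) interval, and one needs the constants $c_1,c_2$ in Step 1 and Step 3 to be controlled uniformly as $\lambda\downarrow 0$ — i.e., one needs a genuine invariance-principle-level control of the random walk ensemble's excursion probabilities, not just a pointwise limit. I expect this uniformity to require a careful reworking of the random walk barrier estimates (using the finite-exponential-moment assumption on $p$ and a KMT-type coupling, or a direct comparison argument), with the non-intersection constraint handled by the standard trick of absorbing it into the potential via a reflection/ordering coupling. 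Once that uniform input is in place, the Gibbsian coupling skeleton is robust and the rest is bookkeeping.
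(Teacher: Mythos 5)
Your proposal correctly identifies the overall ingredients (the Gibbs property, coupling, barrier/area-tilt estimates in the spirit of \cite{IVW}, and the need for uniformity in $\lambda$), but it is missing the mechanism that actually produces the exponential decay in $K$, which is the heart of the proof. You propose a single coupling at one intermediate scale $S\asymp K$: push both profiles into a common thin window on $[-S,-T]\cup[T,S]$, couple the values at $\pm T$, and invoke the Gibbs property on $[-T,T]$. A one-shot coupling of this kind yields at best a uniformly positive coalescence probability, not a failure probability of order $e^{-c_2K}$; and asserting that the laws of $\underline{X}(\pm H_\lambda^2 T)$ under the two measures are within $e^{-c_2K}$ in total variation is essentially the statement being proved, so Step 3 as written is circular. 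The argument used here (following \cite[Sections 6--7]{IVW}) instead relies on a renewal structure: one partitions $[-2M,2M]$ (with $3M=a\wedge b$, so $M\gtrsim K$) into order-$K$ blocks, shows via a large-deviation estimate (Lemma \ref{good}) that a positive fraction of blocks are simultaneously ``good'' for the two independent trajectories, and on each good block the Gibbs property provides an independent, uniformly positive probability of exact coalescence; the probability of never coalescing is then at most $(1-\rho)^{\nu K}\leq e^{-c_2K}$. Your interval $[-S,S]\setminus[-T,T]$ has the right length for such an iteration, but your write-up never iterates.

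A second, related omission: your Step 1 bounds bad events by the area tilt alone, but converting an upper bound on the \emph{unnormalized} weight of a bad event into a bound on its probability requires a matching lower bound on the partition function, uniform in $\lambda$ and in the boundary data. This is exactly the content of Lemma \ref{partition} here (and of \cite[Lemma 7.1]{IVW} in the bridge case), proved by forcing the ensemble into a regular configuration in unit time and then confining the curves to thin horizontal tubes at cost $e^{-c_1 T}$, which is then beaten by taking $\eta$ large so that $q_0(\eta)$ dominates. The genuinely new work needed for the walk case \eqref{freemixing} --- the free-endpoint partition function bound and the control of the first return times $B_\pm$ of the unpinned top curve below height $\eta$ outside $[-2M,2M]$ --- is not addressed in your outline. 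Your concern about uniformity in $\lambda$ is legitimate, but it is resolved by the existing uniform-in-$\lambda$ random walk estimates of \cite{IVW} (and elementary tube estimates) rather than by a KMT-type coupling.
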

	The proof given in \cite{IVW} applies almost verbatim in the bridge case \eqref{bridgemixing}. Indeed, the only difference is that here the area tilt is geometric, i.e., $\mathfrak{b}>1$. In their argument, the area tilt is bounded from below using \eqref{q0}, with a constant factor of $n$ in front to account for each of the $n$ curves. This constant will increase to say $\mathfrak{b}^{n+1}$, but this does not affect the rest of the argument.
	
	In the walk case \eqref{freemixing}, the same argument works, but more modifications are needed. We will describe these in detail in Section \ref{mixpf}.
	
	\subsection*{Acknowledgments} The author would like to thank Amir Dembo for several helpful discussions regarding this work.

	\section{Proof of convergence}\label{convpf}
	
	In this section we prove Theorems \ref{main} and \ref{infcurves}. 
	
	\subsection{Preliminaries} We will need two short lemmas for the proof. The first can be viewed as an invariance principle for the random walk ensembles to the Brownian polymers. It gives convergence to the Brownian polymer if the time scale is fixed and the mesh size tends to zero. In combination with the mixing statement Theorem \ref{mixing} and the convergence results for the Brownian polymer, this will quickly imply the main results. 
	
	\begin{lemma}\label{Mconv}
		Assume $V_\lambda(x) = \lambda x$. Let $\underline{u}_ N,\underline{v}_N$ be sequences in $\mathbb{A}_{n,\lambda_N}^+$ such that $\underline{u}_N \to \underline{u}\in\mathbb{A}_n^+$ and $\underline{v}_N\to\underline{v}\in\mathbb{A}_n^+$ as $N\to\infty$. Fix $M>0$ and write $M_N = \lambda_N^{2/3} \lceil \lambda_N^{-2/3} M\rceil$, so that $M_N\in\mathbb{Z}_{\lambda_N}$ and $M_N\downarrow M$. Fix $T\leq M$, and let $F$ be any continuous bounded functional on $C([-T,T],\mathbb{R}^n)$. Then 
		\begin{align}
			\label{walkM} \lim_{N\to\infty}\mathbb{P}^{\underline{u}_N}_{M_N,+,\lambda_N} \big[F(\underline{x}^{\lambda_N}|_{[-T,T]})\big] = \mathbb{P}^{\underline{u}}_{M,+,0} \big[F(\underline{x}|_{[-T,T]})\big],\\
			\label{bridgeM} \lim_{N\to\infty}\mathbb{P}^{\underline{u}_N,\underline{v}_N}_{M_N,+,\lambda_N} \big[F(\underline{x}^{\lambda_N}|_{[-T,T]})\big] = \mathbb{P}^{\underline{u},\underline{v}}_{M,+,0} \big[F(\underline{x}|_{[-T,T]})\big].
		\end{align}
	\end{lemma}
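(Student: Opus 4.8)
The plan is to prove both limits by writing each tilted line-ensemble measure as a ratio of expectations against the underlying \emph{free} (unconditioned, untilted) reference law and passing to the limit in the numerator and the denominator separately. Concretely, $\mathbb{P}^{\underline{u}_N,\underline{v}_N}_{M_N,+,\lambda_N}[F(\underline{x}^{\lambda_N}|_{[-T,T]})]$ equals
\[
\frac{\mathbb{E}[F(\underline{x}^{\lambda_N}|_{[-T,T]})\,\mathbf{1}_{\Omega^n_{M_N}(\underline{x}^{\lambda_N})}\,e^{-\mathcal{A}^{\lambda_N}_{M_N}(\underline{x}^{\lambda_N})}]}{\mathbb{E}[\mathbf{1}_{\Omega^n_{M_N}(\underline{x}^{\lambda_N})}\,e^{-\mathcal{A}^{\lambda_N}_{M_N}(\underline{x}^{\lambda_N})}]},
\]
where the expectation is over the rescaled law of $n$ independent random walk bridges on $[-M_N,M_N]$ with endpoints $\underline{u}_N,\underline{v}_N$; in the walk case one uses instead $n$ independent walks started at $\underline{u}_N$ at time $-M_N$, whose Brownian counterpart is exactly the reference law in \eqref{freemix}. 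Since the normalizations cancel in the ratio, it suffices to show that the numerator and denominator converge to the analogous quantities for $n$ independent Brownian bridges (resp.\ motions) and that the limiting denominator is strictly positive. The point is that this argument is soft: no invariance principle for the conditioned, area-tilted walk is needed, only for the free walk, after which the conditioning and the area tilt are treated as a bounded functional that is almost surely continuous with respect to the Brownian limit.

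First I would invoke the classical invariance principle for random walk bridges and for random walks — the $n$ coordinates being independent, joint convergence is the product of the one-dimensional statements — to obtain weak convergence in $C([-M,M],\mathbb{R}^n)$ of $\underline{x}^{\lambda_N}|_{[-M,M]}$ to $n$ independent Brownian bridges (resp.\ motions) with the limiting endpoints, using $\underline{u}_N\to\underline{u}$, $\underline{v}_N\to\underline{v}$ and $M_N\downarrow M$; the $O(h_{\lambda_N}^2)$ discrepancy between $M_N$ and $M$ is absorbed by tightness of the modulus of continuity of $\underline{x}^{\lambda_N}$. By the Skorokhod representation theorem I may assume that on a common space $\underline{x}^{\lambda_N}\to\underline{x}$ almost surely, uniformly on $[-M,M]$, with $\sup_{t\in[M,M_N]}|\underline{x}^{\lambda_N}(t)-\underline{x}^{\lambda_N}(M)|\to 0$ (and likewise near $-M$). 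On this coupling $F(\underline{x}^{\lambda_N}|_{[-T,T]})\to F(\underline{x}|_{[-T,T]})$ by continuity of $F$; and since $V_\lambda(x)=\lambda x$ forces $V_{\lambda_N}(H_{\lambda_N}x)=h_{\lambda_N}^2 x$, formula \eqref{arearescaled} exhibits $\mathcal{A}^{\lambda_N}_{M_N}(\underline{x}^{\lambda_N})$ as a left-endpoint Riemann sum of mesh $h_{\lambda_N}^2$ for $\mathcal{A}_M(\underline{x})$, so uniform convergence of the equicontinuous paths together with the vanishing of the sliver $[M,M_N]$ gives $\mathcal{A}^{\lambda_N}_{M_N}(\underline{x}^{\lambda_N})\to\mathcal{A}_M(\underline{x})$ and hence $e^{-\mathcal{A}^{\lambda_N}_{M_N}(\underline{x}^{\lambda_N})}\to e^{-\mathcal{A}_M(\underline{x})}$, with values in $[0,1]$ since $\mathcal{A}\ge 0$.

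The one factor needing real work is $\mathbf{1}_{\Omega^n_{M_N}(\underline{x}^{\lambda_N})}\to\mathbf{1}_{\Omega^n_M(\underline{x})}$. Writing $\rho(\underline{x})(t)=\min\{x_1(t)-x_2(t),\dots,x_{n-1}(t)-x_n(t),\,x_n(t)\}$, the map $\underline{x}\mapsto\mathbf{1}_{\Omega^n_M(\underline{x})}$ is discontinuous (in the uniform topology) exactly on the set $D=\{\underline{x}:\rho(\underline{x})\ge 0\text{ on }[-M,M],\ \min_{[-M,M]}\rho(\underline{x})=0\}$ of paths that stay in the closed Weyl chamber but touch its boundary, so the remaining task is to show $D$ is null under the limiting Brownian bridge (resp.\ motion). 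By continuity and compactness, a path in $D$ satisfies $\rho_k(t_\ast)=0$ for some $t_\ast\in[-M,M]$ and some component $\rho_k\in\{x_i-x_{i+1}\ (i<n),\ x_n\}$, with $\rho_k\ge 0$ throughout, whence $D\subseteq\bigcup_k\{\min_{[-M,M]}\rho_k=0\}$. Each $\rho_k$ is a constant multiple of a Brownian bridge (resp.\ motion) started from a strictly positive value, since $\underline{u},\underline{v}\in\mathbb{A}_n^+$ is the open chamber, and the running minimum of such a process over the fixed interval $[-M,M]$ has an absolutely continuous law, in particular no atom at $0$; hence each of these events, and therefore $D$, is null. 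Thus $\underline{x}\notin D$ a.s., and on the coupling the convergence of indicators follows: if $\underline{x}$ stays strictly inside $\mathbb{A}_n^+$ then so does $\underline{x}^{\lambda_N}$ on all of $[-M_N,M_N]$ for $N$ large, by uniform convergence on $[-M,M]$ and the vanishing sliver, while if $\underline{x}$ exits $\mathbb{A}_n^0$ then so does $\underline{x}^{\lambda_N}$.

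Since the integrands are bounded by $\lVert F\rVert_\infty$, bounded convergence now yields convergence of the numerator and the denominator to their Brownian-polymer analogues in \eqref{bpoly} and \eqref{freemix}, up to the overall normalization of the reference law which is common to both and cancels. The limiting denominator is strictly positive because $\underline{u},\underline{v}\in\mathbb{A}_n^+$ leaves a positive-probability tube of paths confined to a fixed compact subset of $\mathbb{A}_n^+$, on which $\mathcal{A}$ is bounded; dividing and recalling \eqref{bpoly} and \eqref{freemix} gives \eqref{bridgeM} and \eqref{walkM}. I expect the main obstacle to be precisely the almost sure continuity of the Weyl-chamber indicator with respect to the Brownian limit — the fact that Brownian bridges and motions started in the open chamber a.s.\ do not touch its boundary without crossing — together with the routine bookkeeping forced by $M_N\downarrow M$ and $\underline{u}_N\to\underline{u}$, $\underline{v}_N\to\underline{v}$; once these are in place the rest is standard.
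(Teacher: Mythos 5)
Your proposal is correct and follows essentially the same route as the paper: invariance principle plus Skorokhod representation, a.s.\ convergence of the Weyl-chamber indicator via nullity of boundary-touching paths (the paper phrases this as a semicontinuity sandwich between the open and closed chambers, which is the same argument), Riemann-sum convergence of the area tilt, and bounded convergence in the ratio of expectations, with the same $\delta$-sliver treatment of $[M,M_N]$ in the walk case. Your explicit remark on the strict positivity of the limiting denominator is a point the paper leaves implicit, but otherwise the two proofs coincide.
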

	
	\begin{proof}
		We write $\lambda$ in place of $\lambda_N$ for brevity; it will be clear from context which index $N$ we take. We mostly work in the bridge case \eqref{bridgeM} and explain the adjustments needed for the walk case \eqref{walkM}.
		
		Let us write $\underline{s}^{\lambda}$ for $n$ independent random walk bridges distributed according to $\mathbb{P}^{H_{\lambda}\underline{u}_N, H_{\lambda}\underline{v}_N}_{H_{\lambda}^2M_N}$, rescaled as in \eqref{rescaling}. Because of the Brownian scaling, it is known by an invariance principle for bridges (see, e.g., \cite[Theorem 4]{Lig}) that the law of $\underline{s}^{\lambda}|_{[-M,M]}$ converges as $N\to\infty$ to the law of $n$ independent Brownian bridges $\underline{B}$ on $[-M,M]$ from $\underline{u}$ to $\underline{v}$. Since $C([-M,M],\mathbb{R}^n)$ with the uniform topology is separable, by the Skorohod representation theorem there is a probability measure $\mathbb{P}$ on some probability space supporting $C([-M,M],\mathbb{R}^n)$-valued random variables $\underline{y}^{\lambda}$ and $\underline{y}$ with the laws of $\underline{s}^{\lambda}|_{[-M,M]}$ and $\underline{B}$ respectively, such that $\underline{y}^{\lambda} \to \underline{y}$ uniformly on $[-M,M]$ as $N\to\infty$, $\mathbb{P}$-a.s.
		
		Consider first the indicators of the curves remaining ordered in \eqref{bridge} and \eqref{bpoly}. For $a<b$, let us write
		\begin{equation}\label{omega}
			\Omega^{n,+}_{a,b} := \{\underline{f}\in C([a,b],\mathbb{R}^n) : \underline{f}(t)\in\mathbb{A}_n^+ \mbox{ for all } t\in[a,b]\}.
		\end{equation}
		If $a=-b$, we write instead $\Omega^{n,+}_b$. For $\underline{s}^{\lambda}$, we can express the indicator in the definition \eqref{bridge} (if $\underline{s}^{\lambda}$ is taken to be the rescaling and linear interpolation of the bridge $\underline{X}$) as the indicator that $\underline{s}^{\lambda}$ lies in the set $\Omega^{n,+}_M$. Indeed, since $\underline{s}^{\lambda}(-M_N) = \underline{u}_N$ and $\underline{s}^{\lambda}(M_N) = \underline{v}_N$ already lie in $\mathbb{A}_n^+$, and $M_N$ is the smallest element of $\mathbb{Z}_{\lambda}$ larger than $M$, the indicator in \eqref{bridge} is exactly equal to $\mathbf{1}_{\Omega_M^{n,+}}(\underline{s}^{\lambda})$, which is equal in law to $\mathbf{1}_{\Omega_M^{n,+}}(\underline{y}^{\lambda})$ under $\mathbb{P}$. On the other hand, the indicator appearing in \eqref{bpoly} for $\underline{B}$ is simply $\mathbf{1}_{\Omega_M^{n,+}}(\underline{B})$, which is equal in law to $\mathbf{1}_{\Omega_M^{n,+}}(\underline{y})$ under $\mathbb{P}$.
		
		Now observe that $\Omega_M^{n,+}$ is an open subset of $C([-M,M],\mathbb{R}^n)$. Indeed, suppose $\underline{f}\in\Omega^{n,+}_M$. Since $\underline{f}$ is continuous and $[-M,M]$ is compact, there exists $\epsilon>0$ so that $\min_{1\leq i <n, t\in[-M,M]} (f_i(t)-f_{i+1}(t)) > \epsilon$. Then clearly the $\epsilon/2$-ball around $\underline{f}$ in the sup-norm is still contained in $\Omega^{n,+}_M$. It follows that the indicator $\mathbf{1}_{\Omega^{n,+}_M}$ is lower semicontinuous, which implies
		\[
		\liminf_{N\to\infty} \mathbf{1}_{\Omega^{n,+}_M}(\underline{y}^{\lambda}) \geq \mathbf{1}_{\Omega^{n,+}_M}(\underline{y}), \quad \mathbb{P}\mbox{-a.s.}
		\]

		For the other bound, consider instead the closed Weyl chamber $\mathbb{A}_n^0$ and the corresponding set ${\Omega}^{n,0}_M = \{\underline{f}\in C([-M,M],\mathbb{R}^n) : \underline{f}(t) \in\mathbb{A}_n^0 \mbox{ for all } t\in[-M,M]\}$. It is easy to see that ${\Omega}^{n,0}_M$ is closed, so $\mathbf{1}_{{\Omega}^{n,0}_M}$ is upper semicontinuous. Therefore
		\[
		\limsup_{N\to\infty} \mathbf{1}_{{\Omega}^{n,0}_M}(\underline{y}^{\lambda}) \leq \mathbf{1}_{{\Omega}^{n,0}_M}(\underline{y}), \quad \mathbb{P}\mbox{-a.s.}
		\]
		Now the key point is that $\mathbf{1}_{\Omega^{n,+}_M}(\underline{y}) = \mathbf{1}_{{\Omega}^{n,0}_M}(\underline{y})$, $\mathbb{P}$-a.s. Indeed, the complement of these two events for $\underline{y}$ is the event that $\min_{1\leq i<n,t\in[-M,M]} (y_i(t) - y_{i+1}(t)) = 0$. This has probability 0 since the difference of two Brownian bridges is another Brownian bridge, and the minimum of a Brownian bridge is a continuous random variable by the reflection principle. Therefore, combining the above two inequalities and using the trivial inclusion $\Omega^{n,+}_M \subset {\Omega}^{n,0}_M$, we get
		\[
		\limsup_{N\to\infty} \mathbf{1}_{\Omega^{n,+}_M}(\underline{y}^{\lambda}) \leq \limsup_{N\to\infty} \mathbf{1}_{{\Omega}^{n,0}_M}(\underline{y}^{\lambda}) \leq \mathbf{1}_{{\Omega}^{n,0}_M}(\underline{y}) = \mathbf{1}_{\Omega^{n,+}_M}(\underline{y}) \leq \liminf_{N\to\infty} \mathbf{1}_{\Omega^{n,+}_M}(\underline{y}^{\lambda}), \quad \mathbb{P}\mbox{-a.s.}
		\]
		This implies that 
		\begin{equation}\label{indiclim}
			\lim_{N\to\infty} \mathbf{1}_{\Omega^{n,+}_M}(\underline{y}^{\lambda}) = \mathbf{1}_{\Omega^{n,+}_M}(\underline{y}), \quad \mathbb{P}\mbox{-a.s.}
		\end{equation}
		
		For the walk case, we must be slightly more careful at the right endpoint $M$. Write $\underline{r}^{\lambda_N}$ for $n$ independent random walks distributed according to $\mathbb{P}^{H_{\lambda}\underline{u}_N}_{-H_{\lambda}^2 M_N}$ and rescaled as in \eqref{rescaling}. By the invariance principle the law of $\underline{r}^{\lambda}|_{[-M,M+1]}$ converges as $N\to\infty$ to that of $n$ independent Brownian motions on $[-M,M+1]$ starting at $\underline{u}$. Again by the Skorohod representation theorem we can find a probability measure $\mathbb{P}$ and $C([-M,M+1],\mathbb{R}^n)$-valued random variables $\underline{z}^{\lambda}$ and $\underline{z}$ with the same respective laws, so that $\underline{z}^{\lambda}\to\underline{z}$ uniformly, $\mathbb{P}$-a.s. Then the indicator appearing in \eqref{walk} has the same law as $\mathbf{1}_{\Omega^{n,+}_{-M,M_N}}(\underline{z}^{\lambda}|_{[-M,M_N]})$, with notation as in \eqref{omega}. Now since $M_N\downarrow M$, for any $\delta>0$ we have for sufficiently large $N$ that
		\[
		\mathbf{1}_{\Omega_{-M,M+\delta}^{n,+}}(\underline{z}^{\lambda}|_{[-M,M+\delta]}) \leq \mathbf{1}_{\Omega^{n,+}_{-M,M_N}}(\underline{z}^{\lambda}|_{[-M,M_N]}) \leq \mathbf{1}_{\Omega_{-M,M}^{n,+}}(\underline{z}^{\lambda}|_{[-M,M]}).
		\]
		By the argument leading up to \eqref{indiclim}, the left and right hand sides converge as $N\to\infty$, $\mathbb{P}$-a.s., to $\mathbf{1}_{\Omega_{-M,M+\delta}^{n,+}}(\underline{z}|_{[-M,M+\delta]})$ and $\mathbf{1}_{\Omega_{-M,M}^{n,+}}(\underline{z}|_{[-M,M]})$. Now sending $\delta\downarrow 0$, it is clear since $\underline{z}$ is continuous that the first indicator tends $\mathbb{P}$-a.s. to the latter. Therefore
		\begin{equation}\label{walkindic}
			\lim_{N\to\infty} \mathbf{1}_{\Omega_{-M,M_N}^{n,+}}(\underline{z}^{\lambda_N}|_{[-M,M_N]}) = \mathbf{1}_{\Omega_M^{n,+}}(\underline{z}|_{[-M,M]}), \quad \mathbb{P}\mbox{-a.s.}
		\end{equation}
		
		Next consider the area tilts. We work with bridges; there is no change for walks. A small amount of care is needed since $\underline{s}^{\lambda}$ is defined on the larger interval $[-M_N,M_N]$, while $\underline{y}^{\lambda}$ is only defined on $[-M,M]$. In the linear case $V_\lambda(x) = \lambda x$, using \eqref{arearescaled} we write the area tilt for $s_i^{\lambda}$ as
		\[
		\sum_{k=-M_N\lambda^{-2/3}}^{M_N\lambda^{-2/3}-1} \lambda \cdot\lambda^{-1/3}s_i^{\lambda}(\lambda^{2/3}k) = \lambda^{2/3} u_{N,i} +  \lambda^{2/3}\sum_{-M_N\lambda^{-2/3}+1}^{M_N\lambda^{-2/3}-1} s_i^{\lambda}(\lambda^{2/3}k).
		\] 
		Since $M_N\lambda_N^{-2/3} - 1 \leq M$ by construction, and the law of $y_i^{\lambda}$ under $\mathbb{P}$ is the law of $s_i^{\lambda}|_{[-M,M]}$, it now makes sense to define
		\[
		\mathcal{A}_M^{\lambda}(\underline{y}^{\lambda}) := \mathfrak{a}\sum_{i=1}^n \mathfrak{b}^{i-1} \left[ \lambda^{2/3} u_{N,i} +  \lambda^{2/3}\sum_{k=-M_N\lambda^{-2/3}+1}^{M_N\lambda^{-2/3}-1} y_i^{\lambda}(\lambda^{2/3}k) \right].
		\]
		Then the law of $\mathcal{A}_M^{\lambda}(\underline{y}^{\lambda})$ under $\mathbb{P}$ is exactly the law of $\mathcal{A}_{M_N}^{\lambda}(\underline{s}^{\lambda})$. Note that the first term in the sum $\lambda_N^{2/3}u_{N,i}\to 0$ since $u_{N,i}\to u_i < \infty$. Now we estimate
		\begin{align*}
			\left|\lambda^{2/3}\sum_{-M_N\lambda^{-2/3}+1}^{M_N\lambda^{-2/3}-1} y_i^{\lambda}(\lambda^{2/3}k) - \int_{-M}^M y_i(t)\,dt\right| &\leq \lambda^{2/3}\sum_{-M_N\lambda^{-2/3}+1}^{M_N\lambda^{-2/3}-1} \left|y_i^{\lambda}(\lambda^{2/3}k) - y_i(\lambda^{2/3}k)\right|\\
			&\qquad + \left|\lambda^{2/3}\sum_{-M_N\lambda^{-2/3}+1}^{M_N\lambda^{-2/3}-1} y_i(\lambda^{2/3}k) - \int_{-M}^M y_i(t)\,dt\right|\\
			&\leq 2M_N\lVert y_i^{\lambda} - y_i\rVert_\infty + o_N(1)\\ 
			&= o_N(1),
		\end{align*}
		where the term in the second line is seen to be $o_N(1)$ since the sum is a Riemann sum for the continuous function $y_i$. This argument applies to the walks $\underline{z}^\lambda$ as well, so we have
		\begin{equation}\label{arealim}
			\mathcal{A}_M^{\lambda}(\underline{y}^{\lambda}) \longrightarrow \mathcal{A}_M(\underline{y}), \quad \mathcal{A}_M^{\lambda}(\underline{z}^{\lambda}) \longrightarrow \mathcal{A}_M(\underline{z}), \quad \mathbb{P}\mbox{-a.s.}
		\end{equation}
		
		We conclude using the bounded convergence theorem. For bridges the expectations can be written as
		\begin{align*}
			\mathbb{P}^{\underline{u}_N,\underline{v}_N}_{M_N,+,\lambda_N} \big[F(\underline{x}^{\lambda}|_{[-T,T]})\big] &= \frac{ \mathbb{P} \big[F(\underline{y}^{\lambda}|_{[-T,T]})\mathbf{1}_{\Omega^{n,+}_M}(\underline{y}^{\lambda}) e^{-\mathcal{A}^{\lambda}_M(\underline{y}^{\lambda})}\big] } {\mathbb{P} \big[\mathbf{1}_{\Omega^{n,+}_M}(\underline{y}^{\lambda}) e^{-\mathcal{A}^{\lambda}_M(\underline{y}^{\lambda})}\big] }, \\
			\mathbb{P}^{\underline{u},\underline{v}}_{M,+,0} \big[F(\underline{x}|_{[-T,T]})\big] &= \frac{ \mathbb{P} \big[F(\underline{y}|_{[-T,T]})\mathbf{1}_{\Omega^{n,+}_M}(\underline{y})e^{-\mathcal{A}_M(\underline{y})}\big] }{ \mathbb{P} \big[\mathbf{1}_{\Omega^{n,+}_M}(\underline{y})e^{-\mathcal{A}_M(\underline{y})}\big]} .
		\end{align*}
		Since $F$ is continuous and $\underline{y}^{\lambda}\to\underline{y}$ uniformly on $[-M,M] \supseteq [-T,T]$, we have that $F(\underline{y}^{\lambda}|_{[-T,T]}) \to F(\underline{y}|_{[-T,T]})$, $\mathbb{P}$-a.s. Combining with \eqref{indiclim} and \eqref{arealim}, we see that the integrands on the right in the first line, in both numerator and denominator, converge $\mathbb{P}$-a.s.\!\! to the those in the second line as $N\to\infty$. Since the integrands are uniformly bounded by $\sup |F|$ in the numerator and 1 in the denominator, \eqref{bridgeM} follows. Likewise, \eqref{walkindic} and \eqref{arealim} imply \eqref{walkM}.
	\end{proof}
	
	The next lemma will be needed to treat the walk case in Theorem \ref{main}.
	
	\begin{lemma}\label{freeinv}
		The measures
		\[
		\mathbb{P}^{\underline{0}}_{M,+,0} := \lim_{\epsilon\downarrow 0} \mathbb{P}^{\epsilon\underline{w}}_{M,+,0}
		\]
		exist and are independent of $\underline{w}\in\mathbb{A}_n^+$. Moreover, $\mathbb{P}^{\underline{0}}_{M,+,0}$ converges weakly $($u.c.c.$)$ to $\mu_n$ as $M\to\infty$.
	\end{lemma}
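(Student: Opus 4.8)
The plan is to deduce both assertions from the stochastic monotonicity of the Brownian polymer in its boundary data, together with the known convergence \eqref{mun} of the zero- and free-boundary measures to $\mu_n$. Throughout, $\preceq$ denotes stochastic domination of laws on $C([-M,M],\mathbb{R}^n)$ for the coordinatewise partial order; this order is closed, hence antisymmetric, and is preserved under weak limits and under restriction to subintervals. The structural input is that $(\underline u,\underline v)\mapsto\mathbb{P}^{\underline u,\underline v}_{M,+,0}$ and $\underline u\mapsto\mathbb{P}^{\underline u}_{M,+,0}$ are stochastically non-decreasing on $\mathbb{A}_n^+$: for bridges this is \cite{CIW19}, and the one-free-endpoint case follows from the same FKG/log-supermodularity argument, which is unaffected by whether the right endpoint is fixed or integrated out. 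Granting this, for fixed $\underline w\in\mathbb{A}_n^+$ the family $\{\mathbb{P}^{\epsilon\underline w}_{M,+,0}\}_{0<\epsilon\le 1}$ is stochastically non-increasing as $\epsilon\downarrow 0$ and is dominated by $\mathbb{P}^{\underline w}_{M,+,0}$; one-point tightness is then immediate, and combined with the Brownian Gibbs modulus-of-continuity estimates it gives tightness, so the monotone family converges weakly (u.c.c.) to a limit $\mathbb{P}^{\underline 0}_{M,+,0}$. To remove the dependence on $\underline w$, given another $\underline w'\in\mathbb{A}_n^+$ put $\epsilon' = \epsilon\min_i(w_i/w_i')$, so that $\epsilon'\underline w'\le\epsilon\underline w$ coordinatewise and hence $\mathbb{P}^{\epsilon'\underline w'}_{M,+,0}\preceq\mathbb{P}^{\epsilon\underline w}_{M,+,0}$; letting $\epsilon\downarrow 0$ (so also $\epsilon'\downarrow 0$) and using antisymmetry shows the two limits coincide.

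Next I would sandwich this limit between the zero- and free-boundary measures. Via \eqref{freemix}, $\mathbb{P}^{\epsilon\underline w}_{M,+,0}$ is a mixture over the right endpoint $\underline v\in\mathbb{A}_n^+$ of the bridge laws $\mathbb{P}^{\epsilon\underline w,\underline v}_{M,+,0}$; since $\underline v\succeq\underline 0$, monotonicity in the right endpoint gives $\mathbb{P}^{\epsilon\underline w,\underline v}_{M,+,0}\succeq\lim_{\eta\downarrow 0}\mathbb{P}^{\epsilon\underline w,\eta\underline y}_{M,+,0}$ for every $\underline v$ (the limit existing and being $\underline y$-independent by the monotonicity argument behind \eqref{zerobc}), hence $\mathbb{P}^{\epsilon\underline w}_{M,+,0}$ dominates this limit as well; sending $\epsilon\downarrow 0$ yields $\mathbb{P}^{\underline 0}_{M,+,0}\succeq\mathbb{P}^{\underline 0,\underline 0}_{M,+,0}$. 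In the other direction, \eqref{freebpoly} presents $\mathbb{P}_{M,+,0}$ as a mixture over $\underline u\in\mathbb{A}_n^+$ of the measures $\mathbb{P}^{\underline u}_{M,+,0}$, each of which dominates $\mathbb{P}^{\underline 0}_{M,+,0}$ since $\underline u\succeq\underline 0$; hence $\mathbb{P}_{M,+,0}\succeq\mathbb{P}^{\underline 0}_{M,+,0}$. Altogether
\[
\mathbb{P}^{\underline 0,\underline 0}_{M,+,0}\ \preceq\ \mathbb{P}^{\underline 0}_{M,+,0}\ \preceq\ \mathbb{P}_{M,+,0},
\]
and the same chain holds after restriction to any $[-T,T]$.

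To finish I would let $M\to\infty$. By \eqref{mun} the two outer terms both converge weakly (u.c.c.) to $\mu_n$; consequently the one-point laws of $\mathbb{P}^{\underline 0}_{M,+,0}$ on $[-T,T]$ are squeezed between tight families and hence tight, and the Brownian Gibbs property then upgrades this to tightness of $\{\mathbb{P}^{\underline 0}_{M,+,0}|_{[-T,T]}\}_M$ in $C([-T,T],\mathbb{R}^n)$. For any finite set of times the corresponding finite-dimensional marginal of $\mathbb{P}^{\underline 0}_{M,+,0}$ is squeezed in stochastic order on a Euclidean space between two sequences converging to the marginal of $\mu_n$, so it converges to that marginal as well (antisymmetry on $\mathbb{R}^d$). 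Tightness together with convergence of all finite-dimensional distributions yields $\mathbb{P}^{\underline 0}_{M,+,0}\Rightarrow\mu_n$ in the u.c.c.\ sense.

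The main obstacle I anticipate is the stochastic monotonicity of $\underline u\mapsto\mathbb{P}^{\underline u}_{M,+,0}$ with one free endpoint: although it is morally the same FKG computation as for bridges, one must verify that integrating out the right endpoint preserves the log-supermodularity of the relevant Gaussian densities, the area tilt, and the indicator of $\Omega^n_M$, and that the resulting monotonicity interacts correctly with the mixture representations \eqref{freemix} and \eqref{freebpoly}. The remaining inputs — tightness via the Brownian Gibbs property and antisymmetry of the closed stochastic order on $C([-T,T],\mathbb{R}^n)$ — are routine given \eqref{mun}.
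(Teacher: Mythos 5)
Your proposal is correct and follows essentially the same route as the paper: stochastic monotonicity in the left endpoint (the paper obtains the one-free-endpoint version by rerunning the coupling of \cite[Appendix B]{CIW19} with the right boundary constraint dropped, rather than via FKG, but this is the same input), monotone convergence for existence, and then the sandwich $\mathbb{P}^{\underline 0,\underline 0}_{M,+,0}\preceq\mathbb{P}^{\underline 0}_{M,+,0}\preceq\mathbb{P}_{M,+,0}$ obtained from the mixture representations \eqref{freemix} and \eqref{freebpoly}, concluded via \eqref{mun}. Your added details on independence of $\underline w$ and on upgrading the stochastic squeeze to weak convergence are consistent with (and slightly more explicit than) the paper's argument.
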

	
	\begin{proof}
		We will prove both statements using the stochastic monotonicity result \cite[Lemma 1.2]{CIW19}. For the first statement, note that the same coupling argument as in \cite[Appendix B]{CIW19}, except with the constraint on the right boundary removed, shows that 
		\begin{equation}\label{monot}
			\underline{u} \leq \underline{v} \quad \mathrm{implies} \quad \mathbb{P}^{\underline{u}}_{M,+,0} \preceq \mathbb{P}^{\underline{v}}_{M,+,0}.
		\end{equation}
		Here $\underline{u}\leq\underline{v}$ means $u_i \leq v_i$ for $1\leq i\leq n$, and $\preceq$ denotes stochastic ordering of measures. (We recall that for two probability measures $\mu,\nu$ the stochastic ordering $\mu \preceq \nu$ means that for any increasing functional $F$ we have $\mu[F(X)] \leq \nu[F(X)]$.) The existence statement of the lemma now follows by monotone convergence.
		
		To prove the second statement, we will show that 
		\begin{equation}\label{squeeze}
			\mathbb{P}^{\underline{0},\underline{0}}_{M,+,0} \preceq \mathbb{P}^{\underline{0}}_{M,+,0} \preceq \mathbb{P}_{M,+,0},
		\end{equation} 
		where we recall $\mathbb{P}_{M,+,0}$ denotes free boundary conditions as in \eqref{freebpoly}. Since the left and right measures are both known to converge to $\mu_n$ by \eqref{mun}, this implies the result. Let $F$ be an increasing functional on $C([-M,M],\mathbb{A}_n^+)$. To prove the first inequality in \eqref{squeeze}, we note by \eqref{freemix} that $\mathbb{P}^{\epsilon\underline{w}}_{M,+,0}$ can be written as a mixture of the bridge measures via
		\[
		\mathbb{P}^{\epsilon\underline{w}}_{M,+,0}[F(\underline{x})] = \int_{\mathbb{A}_n^+} \mathbb{P}^{\epsilon\underline{w},\underline{v}}_{M,+,0}[F(\underline{x})]  \frac{Z^{\epsilon\underline{w},\underline{v}}_{M,+,0}}{Z^{\epsilon\underline{w}}_{M,+,0}}\,d\underline{v}.
		\]
		Since $\mathbb{P}^{\epsilon\underline{w},\underline{v}}_{M,+,0} \succeq \lim_{\eta\downarrow 0} \mathbb{P}^{\epsilon\underline{w},\eta\underline{v}}_{M,+,0} =  \mathbb{P}^{\epsilon\underline{w},\underline{0}}_{M,+,0}$ for each $\underline{v}$, we get
		\[
		\mathbb{P}^{\epsilon\underline{w}}_{M,+,0}[F(\underline{x})]  \geq 		\mathbb{P}^{\epsilon\underline{w},\underline{0}}_{M,+,0}[F(\underline{x})]  \int_{\mathbb{A}_n^+}  \frac{Z^{\epsilon\underline{w},\underline{v}}_{M,+,0}}{Z^{\epsilon\underline{w}}_{M,+,0}}\,d\underline{v} = \mathbb{P}^{\epsilon\underline{w},\underline{0}}_{M,+,0}[F(\underline{x})].
		\]
		Taking $\epsilon\downarrow 0$ implies the first inequality in \eqref{squeeze}. Similarly, for the second inequality \eqref{monot} implies
		\[
		\mathbb{P}_{M,+,0}[F(\underline{x})]  = \int_{\mathbb{A}_n^+} \mathbb{P}^{\underline{u}}_{M,+,0}[F(\underline{x})] 	\frac{Z^{\underline{u}}_{M,+,0}}{Z_{M,+,0}}\,d\underline{u} \geq \mathbb{P}^{\underline{0}}_{M,+,0}[F(\underline{x})]  \int_{\mathbb{A}_n^+} \frac{Z^{\underline{u}}_{M,+,0}}{Z_{M,+,0}}\,d\underline{u} = \mathbb{P}^{\underline{0}}_{M,+,0}[F(\underline{x})].
		\]
	\end{proof}

	\subsection{Proof of Theorems \ref{main} and \ref{infcurves}}
	
	We now complete the proof of convergence.
	
	\begin{proof}
		First consider bridges. Let $F$ be any bounded continuous functional on $C([-T,T],\mathbb{R}^n)$. Fix $M>T$ and define $M_N$ as in the statement of Lemma \ref{Mconv}. By the mixing bound \eqref{bridgemixing} in Theorem \ref{mixing}, we have
		\[
		\mathbb{P}_{a_N,+,\lambda_N}^{\underline{u}_N,\underline{v}_N} \big[F(\underline{x}^{\lambda_N}|_{[-T,T]})\big] = \mathbb{P}_{M_N,+,\lambda_N}^{\underline{r},\underline{s}} \big[F(\underline{x}^{\lambda_N}|_{[-T,T]})\big] + R_M
		\]
		where $R_M$ is an error term satisfying $|R_M|\leq c_1 e^{-c_2(M-T)}$, uniformly in $u_{N,1},v_{N,1},r_1,s_1\leq C$ and large $N$. In particular, we can fix $\epsilon>0$ and $\underline{w}\in\mathbb{A}_n^+$ and choose $\underline{r}=\underline{s} = \underline{w}_N$ for some sequence $\underline{w}_N\to \epsilon \underline{w}$. Lemma \ref{Mconv} then implies that
		\begin{align*}
			\limsup_{N\to\infty} \mathbb{P}_{a_N,+,\lambda_N}^{\underline{u}_N,\underline{v}_N} \big[F(\underline{x}^{\lambda_N}|_{[-T,T]})\big]  &\leq \mathbb{P}^{\epsilon\underline{w},\epsilon\underline{w}}_{M,+,0} \big[ F(\underline{x}|_{[-T,T]})\big] + c_1e^{-c_2(M-T)},\\
			\liminf_{N\to\infty} \mathbb{P}_{a_N,+,\lambda_N}^{\underline{u}_N,\underline{v}_N} \big[F(\underline{x}^{\lambda_N}|_{[-T,T]})\big]  &\geq \mathbb{P}^{\epsilon\underline{w},\epsilon\underline{w}}_{M,+,0} \big[ F(\underline{x}|_{[-T,T]})\big] - c_1e^{-c_2(M-T)}.
		\end{align*}
		Now first taking $\epsilon\downarrow 0$, and then $M\to\infty$, by \eqref{mun} the right hand sides of the above two inequalities both converge to the same limit. Combining, we see that the limit exists and
		\begin{equation}\label{thm1pf}
			\lim_{N\to\infty} \mathbb{P}_{a_N,+,\lambda_N}^{\underline{u}_N,\underline{v}_N} \big[F(\underline{x}^{\lambda_N}|_{[-T,T]})\big] = \mu_n \big[F(\underline{x}|_{[-T,T]})\big].
		\end{equation}
		For the walk case, we instead apply \eqref{freemixing} in Theorem \ref{mixing} to get
		\[
		\mathbb{P}_{a_N,+,\lambda_N}^{\underline{u}_N} \big[F(\underline{x}^{\lambda_N}|_{[-T,T]})\big] = \mathbb{P}_{M_N,+,\lambda_N}^{\underline{r}} \big[F(\underline{x}^{\lambda_N}|_{[-T,T]})\big] + R_M,
		\] 
		and then in view of Lemma \ref{freeinv} the same argument shows that
		\begin{equation}\label{thm1pf2}
			\lim_{N\to\infty} \mathbb{P}_{a_N,+,\lambda_N}^{\underline{u}_N} \big[F(\underline{x}^{\lambda_N}|_{[-T,T]})\big] = \mu_n\big[F(\underline{x}|_{[-T,T]})\big].
		\end{equation}
		Since $F$ was arbitrary, \eqref{thm1pf} and \eqref{thm1pf2} prove Theorem \ref{main}.
		
		Now to prove Theorem \ref{infcurves}, fix any $k\leq n$ and any bounded continuous functional $F$ on $C([-T,T],\mathbb{R}^k)$. Let $\pi_k$ denote the projection of $\mathbb{R}^n$ onto the first $k$ coordinates in $\mathbb{R}^k$. If $\underline{u}_N^n,\underline{v}_N^n \in \mathbb{A}_{n,\lambda_N}^+$ remain bounded in $N$, then \eqref{thm1pf} and \eqref{thm1pf2} imply that
		\begin{align*}
			\lim_{N\to\infty} \mathbb{P}_{n,a_N,+,\lambda_N}^{\underline{u}_N^n} \big[F(\pi_k\circ\underline{x}^{\lambda_N}|_{[-T,T]})\big] = \mu_n \big[F(\pi_k\circ\underline{x}|_{[-T,T]})\big],\\
			\lim_{N\to\infty} \mathbb{P}_{n,a_N,+,\lambda_N}^{\underline{u}_N^n,\underline{v}_N^n} \big[F(\pi_k\circ\underline{x}^{\lambda_N}|_{[-T,T]})\big] = \mu_n\big[F(\pi_k\circ\underline{x}|_{[-T,T]})\big],
		\end{align*} 
		for each $n\geq 1$. By \eqref{mu}, the right hand side of both lines converges to $\mu[F(\pi_k\circ\underline{x}|_{[-T,T]})]$ as $n\to\infty$. Again since $F$ and $k$ are arbitrary, this proves Theorem \ref{infcurves}.
	\end{proof}

\section{Mixing for walks}\label{mixpf}

In this section we explain how the mixing bounds \eqref{freemixing} in Theorem \ref{mixing} can be proven in the case of random walks (that is, fixed only on the left) with geometric area tilts. We emphasize that we follow very closely the argument of \cite{IVW}, making only small modifications where needed to account for the different boundary conditions. For convenience, we will assume without loss of generality that $H_\lambda^2 \in \mathbb{Z}$ in this section, so that $\mathbb{Z}\subset \mathbb{Z}_\lambda$.

\subsection{Reduction to good blocks}

As in \cite[Section 6]{IVW}, we fix $\eta>0$ large and $\epsilon>0$ small, and define the regular sets $\mathbb{A}_n^{+,\mathsf{r}} = \{\underline{x}\in\mathbb{A}_n^+ : x_1 \leq \eta \mbox{ and } \min_{1\leq i < n} (x_i-x_{i+1}) \geq \epsilon\}$ and $\mathbb{A}_{n,\lambda}^{+,\mathsf{r}} = \mathbb{A}_{n,\lambda}^+ \cap \mathbb{A}_n^{+,\mathsf{r}}$. An interval $[\ell,\ell+1]$ is called regular for a trajectory $\underline{x}(\cdot)$ if $\underline{x}(\ell), \underline{x}(\ell+1) \in \mathbb{A}_n^{+,\mathsf{r}}$ and $\max_{t\in[\ell,\ell+1]} x_1(t) \leq 2\eta$. For a block $D_\ell = D_\ell^- \cup D_\ell^+ := [2\ell,2\ell+1] \cup [2\ell+1,2(\ell+1)]$, we say $D_\ell$ is good if both $D_\ell^+$ and $D_\ell^-$ are regular. We also write $D_\ell$ for the event that the block $D_\ell$ is good for the trajectory $\underline{x}^\lambda$.

Consider a couple of independent trajectories $(\underline{x}^\lambda, \underline{y}^\lambda)$ distributed according to $\mathbb{P}^{\underline{r}}_{a,+,\lambda} \otimes \mathbb{P}^{\underline{u}}_{b,+,\lambda}$. We let $3M = a\wedge b$, and for $D_\ell \subset [-2M,2M]$ we write $\mathfrak{D}_\ell^\pm$ for the event that $D_\ell^\pm$ is good for both $\underline{x}^\lambda$ and $\underline{y}^\lambda$, and $\mathfrak{D}_\ell = \mathfrak{D}_\ell^+ \cap \mathfrak{D}_\ell^-$. We define 
\[
\mathcal{M}_0 = \sum_{-M\leq\ell\leq M-1} \mathbf{1}_{\mathfrak{D}_\ell}.
\]
We will prove below the following analogue of \cite[Lemma 6.2]{IVW}.

\begin{lemma}\label{good}
	For $\eta>0$ large enough and $\epsilon>0$ small enough, there exist $\nu,\kappa>0$ such that
	\[
	\mathbb{P}^{\underline{r}}_{a,+,\lambda} \otimes \mathbb{P}^{\underline{u}}_{b,+,\lambda} (\mathcal{M}_0 \leq \nu M) \leq e^{-\kappa M},
	\]
	uniformly in $\lambda$ small, $M$ large, and $r_1,u_1\leq \eta$.
\end{lemma}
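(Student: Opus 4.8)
The plan is to follow \cite[Lemma 6.2]{IVW} closely: use the Gibbs property (Remark \ref{gibbs}) to render the blocks conditionally independent, and split the estimate into (a) a uniform positive lower bound on the conditional probability that a block is good given that its endpoints lie deep inside the Weyl chamber, and (b) an a priori estimate that a positive density of the block endpoints lie deep inside the Weyl chamber, with exponentially high probability. Write $\mathbb{Q} = \mathbb{P}^{\underline{r}}_{a,+,\lambda}\otimes\mathbb{P}^{\underline{u}}_{b,+,\lambda}$ for the law of $(\underline{x}^\lambda,\underline{y}^\lambda)$, and let $\mathcal{G}$ be the $\sigma$-algebra generated by $\underline{x}^\lambda(2\ell)$ and $\underline{y}^\lambda(2\ell)$ over all integers $\ell$ with $D_\ell\subset[-2M,2M]$; since $H_\lambda^2\in\mathbb{Z}$ in this section, these are genuine times of the underlying walks. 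Because the area tilt $\mathcal{A}^\lambda$ and the order constraint $\Omega$ both split over blocks given the values at the even times, the Gibbs property implies that, conditionally on $\mathcal{G}$, the restrictions of $\underline{x}^\lambda,\underline{y}^\lambda$ to the open blocks $(2\ell,2\ell+2)$ are independent, with the restriction of $\underline{x}^\lambda$ to $D_\ell$ distributed as $\mathbb{P}^{\underline{x}^\lambda(2\ell),\,\underline{x}^\lambda(2\ell+2)}_{2\ell,2\ell+2,+,\lambda}$, and similarly for $\underline{y}^\lambda$. In particular, since the midpoints $2\ell+1$ are odd times and hence not $\mathcal{G}$-measurable, the events $\mathfrak{D}_\ell$ are conditionally independent given $\mathcal{G}$.

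For the per-block bound, fix a compact set $\mathsf{K}$ contained in the interior of the regular set $\mathbb{A}_n^{+,\mathsf{r}}$ (so that the bottom curve is bounded away from the wall), and let $\mathcal{E}_\ell\in\mathcal{G}$ be the event that all four of $\underline{x}^\lambda(2\ell),\underline{x}^\lambda(2\ell+2),\underline{y}^\lambda(2\ell),\underline{y}^\lambda(2\ell+2)$ lie in $\mathsf{K}$. We claim there is $p_0 = p_0(\mathsf{K},\eta,n)>0$, uniform in $\lambda$ small, with $\mathbb{Q}(\mathfrak{D}_\ell\mid\mathcal{G})\geq p_0$ on $\mathcal{E}_\ell$. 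Indeed, by the bridge case of Lemma \ref{Mconv}, as $\lambda\downarrow0$ with endpoints converging within $\mathsf{K}$, the conditional law of the block converges weakly (u.c.c.) to the corresponding Brownian polymer bridge on a length-$2$ interval; for such a bridge with endpoints in $\mathsf{K}$, the event that the path stays in $\mathbb{A}_n^+$, stays below $2\eta$, and satisfies $\underline{x}(2\ell+1)\in\mathbb{A}_n^{+,\mathsf{r}}$ contains an open set of positive mass, so by the portmanteau theorem together with compactness of $\mathsf{K}$ the lower bound is uniform for $\lambda$ small; taking the product of the bounds for $\underline{x}^\lambda$ and $\underline{y}^\lambda$ (independent under $\mathbb{Q}$) gives $\mathbb{Q}(\mathfrak{D}_\ell\mid\mathcal{G})\geq p_0$ on $\mathcal{E}_\ell$. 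Hence, with $N_{\mathcal{E}} := \sum_{-M\leq\ell\leq M-1}\mathbf{1}_{\mathcal{E}_\ell}$, on the event $\{N_{\mathcal{E}}\geq\nu_1 M\}$ the conditional law of $\mathcal{M}_0$ given $\mathcal{G}$ stochastically dominates a $\mathrm{Bin}(\nu_1 M,p_0)$ variable, so a Chernoff bound gives $\mathbb{Q}(\mathcal{M}_0\leq\nu M\mid\mathcal{G})\leq e^{-\kappa' M}$ there, provided $\nu<\nu_1 p_0$.

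It then remains to prove the a priori estimate $\mathbb{Q}(N_{\mathcal{E}}<\nu_1 M)\leq e^{-\kappa'' M}$ for a suitable $\nu_1>0$, i.e.\ that with overwhelming probability a positive fraction of the even times $2\ell\in[-2M,2M]$ have $\underline{x}^\lambda(2\ell),\underline{y}^\lambda(2\ell)\in\mathsf{K}$ (top curve bounded, gaps and bottom curve bounded below). This is the entropic-repulsion-and-confinement estimate preceding \cite[Lemma 6.2]{IVW}: the area tilt $e^{-\mathcal{A}^\lambda_a}$ forbids sustained excursions of the top curve, while the hard wall and non-intersection repel the curves so that their gaps and the bottom curve stay macroscopic on a positive density of times; the argument uses the Gibbs property and the potential bound \eqref{q0}. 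Two modifications of \cite{IVW} are needed: (i) the geometric prefactors $\mathfrak{b}^{i-1}$ only strengthen the area lower bound, replacing the factor $n$ in \cite{IVW} by $\mathfrak{b}^{n}$ (cf.\ the discussion after Theorem \ref{mixing}); and (ii) because $\underline{x}^\lambda$ and $\underline{y}^\lambda$ are free at the right endpoints $a,b$, one works on $[-2M,2M]$, which by $3M=a\wedge b$ leaves buffers of length $\geq M$ at both ends, and one verifies that the marginal of the free walk at times in $[-2M,2M]$ remains pinned down by the area tilt, so that the relaxation and repulsion arguments of \cite{IVW} carry over near the free end with only notational changes.

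Combining the last two paragraphs, $\mathbb{Q}(\mathcal{M}_0\leq\nu M)\leq\mathbb{Q}(N_{\mathcal{E}}<\nu_1 M)+\mathbb{Q}(\mathcal{M}_0\leq\nu M,\,N_{\mathcal{E}}\geq\nu_1 M)\leq e^{-\kappa'' M}+e^{-\kappa' M}$, and taking $\kappa<\kappa'\wedge\kappa''$ yields the claim, uniformly in $\lambda$ small, $M$ large, and $r_1,u_1\leq\eta$. The main obstacle is the a priori estimate of the third paragraph, specifically carrying out \cite{IVW}'s entropic-repulsion bounds in the free-right-endpoint setting with only a linear-size buffer; by contrast, the decoupling and Chernoff steps are routine, and the uniform per-block lower bound follows cleanly from Lemma \ref{Mconv}, the portmanteau theorem, and compactness.
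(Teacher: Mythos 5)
Your architecture (Gibbs decoupling over blocks, a uniform per-block conditional lower bound, an a priori density estimate, then a Chernoff bound) matches the general strategy of \cite{IVW} and of the paper, and your decoupling, per-block, and Chernoff steps are sound. The genuine gap is in your a priori estimate, and it is not the routine citation you make it out to be: you need that with probability $1-e^{-cM}$ a positive fraction of the \emph{deterministic} even times $2\ell$ have both configurations in a compact set $\mathsf{K}\subset\mathbb{A}_n^{+,\mathsf{r}}$, i.e.\ top curve bounded, all gaps $x_i-x_{i+1}\geq\epsilon$, and the bottom curve away from the wall. The entropic-repulsion mechanism you invoke (the energy-vs-entropy comparison using \eqref{q0} against a partition function lower bound) controls only the \emph{top} curve: the area tilt penalizes $\{x_1^\lambda>\eta \mbox{ on a block}\}$ at rate $q_0(\eta)$ per unit time, but gives no direct penalty for two curves being close or for the bottom curve hugging the wall. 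Requiring regularity of the full configuration at a positive density of prescribed times is essentially the conclusion of the lemma itself, so your decomposition is circular at this point. The paper (following \cite{IVW}) avoids exactly this by conditioning on the much weaker ``pre-good'' event for a 5-block --- only that $x_1^\lambda$ dips below $\eta$ somewhere in the two outer sub-blocks --- which \emph{is} controlled by the area tilt alone; the separation and wall-repulsion needed for goodness of the central block are then \emph{produced} probabilistically inside the conditional per-block bound \cite[(7.10)]{IVW}, not assumed in the a priori estimate. Your second and third paragraphs would need to be restructured along these lines.

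Relatedly, the free-right-endpoint issue you defer to ``only notational changes'' in (ii) is the actual new content here. The energy-entropy comparison needs a lower bound on the normalizing constant for \emph{walks}, which is the new Lemma \ref{partition} ($Z^{\underline{w}}_{T,+,\lambda}\geq c_2e^{-c_1T}$, proved by steering the walks into the regular set in unit time and then confining them to disjoint horizontal tubes), and one must separately control the overshoot times $B_\pm$ past $\pm2M$ before the top curve of the free walk returns below $\eta$ in order to reduce to a bounded window via the Gibbs property. Neither ingredient appears in your outline, and without them the density estimate does not close even for the top curve.
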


Given this lemma, the proof of the bounds \eqref{freemixing} in Theorem \ref{mixing} proceeds in exactly the same way as in \cite[Section 6.4]{IVW}. The argument there relies on the Gibbs property for the bridge measures, but the walk measures satisfy precisely the same Gibbs property (see Remark \ref{gibbs}), and thus the argument translates immediately. It therefore remains to prove Lemma \ref{good}.

\subsection{Proof of Lemma \ref{good}}
As in \cite[Section 7.3]{IVW}, we define the notion of a pre-good 5-block as follows. For integers $-\lfloor M/5\rfloor \leq \ell \leq \lfloor M/5\rfloor$, a 5-block $D_\ell^{(5)} = D_{5\ell-2} \cup \cdots \cup D_{5\ell+2}$ is called pre-good for $\underline{x}^\lambda$ if 
\[
\min_{t\in D_{5\ell-2}} x_1^\lambda(t) \leq \eta, \quad \min_{t\in D_{5\ell+2}} x_1^\lambda(t) \leq \eta.
\]
Let $\tilde{\mathfrak{D}}_{5\ell}$ denote the event that $D_\ell^{(5)}$ is jointly pre-good for $\underline{x}^\lambda$ and $\underline{y}^\lambda$. By \cite[(7.10)]{IVW}, we have
\[
\mathbb{P}^{\underline{r},\underline{s}}_{-4,6,+,\lambda}(D_0 \mbox{ is good} \mid D_0^{(5)} \mbox{ is pre-good}) \geq \rho_1
\]
for a constant $\rho_1(\eta,\epsilon)$ uniformly in $\underline{r},\underline{s}\in\mathbb{A}_{n,\lambda}^+$ and small $\lambda$. (This is proven for $\mathfrak{b}=1$, but in our case $\mathfrak{b}>1$ just results in a different constant.) By the Gibbs property, this implies that for each $\ell$,
\[
\mathbb{P}^{\underline{r}}_{a,+,\lambda}\otimes\mathbb{P}^{\underline{s}}_{b,+,\lambda} (\mathfrak{D}_{5\ell} \mid \tilde{\mathfrak{D}}_{\ell}^{(5)}) \geq \rho_1^2.
\]
It therefore suffices to prove that with
\[
\mathcal{M}_0^{(5)} = \sum_{j=-\lfloor M/5\rfloor}^{\lfloor M/5\rfloor} \mathbf{1}_{\mathfrak{D}_\ell^{(5)}},
\]
we can find constants $\nu^{(5)}$ and $\kappa^{(5)}$ such that
\begin{equation}\label{pregood}
\mathbb{P}^{\underline{r}}_{a,+,\lambda}\otimes\mathbb{P}^{\underline{s}}_{b,+,\lambda} (\mathcal{M}_0^{(5)} \leq \nu^{(5)}M) \leq e^{-\kappa^{(5)}M}
\end{equation}
uniformly in small $\lambda$, large $M$, $a,b\geq 3M$, and $r_1,s_1\leq \eta$.

To prove \eqref{pregood}, we will need the following analogue of \cite[Lemma 7.1]{IVW}.

\begin{lemma}\label{partition}
	There exist constants $c_1(n)$, $c_2(n,\eta)$, and $T_0(\eta)$ such that for all $T\geq T_0$,
	\[
	Z^{\underline{w}}_{T,+,\lambda} \geq c_2 e^{-c_1T}
	\] 
	uniformly in $w_1\leq \eta$ and $\lambda$ small.
\end{lemma}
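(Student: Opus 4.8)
\textbf{Proof plan for Lemma \ref{partition}.}
The plan is to bound $Z^{\underline{w}}_{T,+,\lambda}$ from below by restricting the walk ensemble to a favorable ``tube'' of trajectories and estimating the probability of that tube under the free random walk law, combined with an elementary bound on the area tilt inside the tube. Concretely, I would fix a deterministic smooth profile $\underline{\phi} = (\phi_1 > \cdots > \phi_n)$ on $[-1,1]$, say affine near the endpoints so that $\underline{\phi}(-1) = \underline{w}$ (which is possible since $w_1 \le \eta$ and we have freedom in the lower coordinates) and $\underline{\phi}$ bounded away from the wall and well-separated on the rest of $[-1,1]$, and then on $[-T,T]$ take the trajectories to follow a corridor of width $\delta h_\lambda$ (in the unscaled picture) around the rescaled profile $\underline{\phi}$ extended to be essentially constant (at height $\approx \eta$ for the top curve, and increasing gaps below) on the bulk $[-T+1, T-1]$. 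Inside this corridor the curves stay ordered and positive, the maximum of $x_1$ is $O(1)$, so the area tilt $\mathcal{A}^\lambda_T$ is at most $\mathfrak{a} \sum_{i} \mathfrak{b}^{i-1} \cdot O(\eta) \cdot (2T+1)$, giving $e^{-\mathcal{A}^\lambda_T} \ge e^{-c_1 T}$ with $c_1 = c_1(n,\eta)$ (here I would absorb the geometric sum $\sum_{i=1}^n \mathfrak{b}^{i-1}$ into the constant, exactly as the excerpt notes is harmless).

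The remaining factor is the probability under $\mathbb{P}^{H_\lambda \underline{w}}_{-H_\lambda^2 T}$ (the unscaled product walk law) that each of the $n$ independent walks stays in its designated unscaled corridor of width $\asymp H_\lambda$ for the $\asymp H_\lambda^2 T$ steps. Since each walk has mean zero, variance one, and finite exponential moments, a local central limit theorem / moderate deviation argument (or simply a Donsker-type estimate, as the walk rescaled on $[-T,T]$ converges to Brownian motion) shows this probability is bounded below by $e^{-c(T) H_\lambda^2 / H_\lambda^2} = e^{-c(T)}$ uniformly in small $\lambda$ — more carefully, one tiles $[-T,T]$ into $O(T)$ unit intervals, on each of which the walk must make an $O(H_\lambda)$-sized excursion staying in a corridor, an event of probability bounded below by a constant $\rho = \rho(\delta, n) > 0$ uniformly in $\lambda$ by the invariance principle, so the corridor probability is at least $\rho^{O(T)} = e^{-c_1' T}$. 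Multiplying the two lower bounds yields $Z^{\underline{w}}_{T,+,\lambda} \ge c_2 e^{-c_1 T}$ for $T \ge T_0$, with $T_0$ chosen large enough that the endpoint-matching segment $[-T,-T+1]$ has room to interpolate from $\underline{w}$ to the bulk profile regardless of $w_1 \le \eta$.

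The main obstacle I anticipate is not the area tilt (which is a trivial bound) but making the corridor probability estimate genuinely uniform in $\lambda$ small while keeping the corridor wide enough (in rescaled units it must be $\Theta(1)$, hence $\Theta(H_\lambda)$ unscaled) that the $n$ independent walks can be simultaneously confined without the non-intersection constraint ever binding with positive-probability loss — this is where one needs the separation $\epsilon$ between profile levels to dominate the $\delta$-corridor width, and where the invariance principle (rather than a crude reflection bound, which would degrade with $n$) is doing the real work. A secondary point requiring a little care is that $\underline{w}$ is only controlled through $w_1 \le \eta$; the lower coordinates $w_2, \dots, w_n$ could be arbitrarily close to each other or to zero, so the interpolating segment on $[-T, -T+1]$ must be allowed to ``open up'' the gaps from whatever $\underline{w}$ is to the regular configuration, and the probability cost of that single unit-length opening step is again a $\lambda$-uniform constant by the invariance principle, absorbed into $c_2$. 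I would organize the write-up as: (i) define the target profile and corridor; (ii) bound the area tilt; (iii) bound the corridor probability via tiling + invariance principle; (iv) combine.
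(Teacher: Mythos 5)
Your overall strategy (restrict to a corridor event, bound the area tilt trivially on that event, and bound the corridor probability by tiling $[-T,T]$ into $O(T)$ blocks each carrying a $\lambda$-uniform constant probability via the invariance principle) is exactly the strategy of the paper's proof. The paper's ``corridor'' is centered at the random regular configuration $\underline{x}^\lambda(1)$ reached after one unit of time rather than at a deterministic profile, and its blocks have length $c_3\epsilon^2$ rather than $1$, but these are cosmetic differences. Your handling of the initial segment (one unit-time ``opening up'' step from an arbitrary $\underline{w}$ with $w_1\leq\eta$ to a well-separated configuration, at a $\lambda$-uniform constant cost absorbed into $c_2$) is also the paper's move, implemented there via the uniform estimates (\textbf{I.2}), (\textbf{I.3}) of \cite[Section 7.2]{IVW}.

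There is, however, one genuine gap: you place the bulk profile at height $\approx\eta$ for the top curve, so your area-tilt bound is $\mathcal{A}^\lambda_T \lesssim \mathfrak{a}\sum_i\mathfrak{b}^{i-1}\,\eta\,T$, and you explicitly write $c_1=c_1(n,\eta)$. The lemma asserts $c_1=c_1(n)$, with only the prefactor $c_2$ allowed to depend on $\eta$, and this is not a pedantic distinction: in the proof of \eqref{pregood} the lemma is used to cancel a factor $e^{-q_0(\eta)M/10}$ against $e^{24c_1(n)M}$, and one then \emph{enlarges $\eta$} so that $q_0(\eta)/10>25c_1(n)$. If $c_1$ grows linearly in $\eta$, as in your construction, that choice is circular and the downstream argument collapses. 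The fix is to have the trajectories descend during the initial unit interval from below $2\eta$ to a regular configuration with $x_1\leq 1$, and then confine them below an $\eta$-independent level (the paper uses $2$) on the remaining time $[1,2T]$; the probability cost of the descent is a single $\lambda$-uniform constant depending on $\eta$ (which goes into $c_2(n,\eta)$), while the per-unit-time tilt rate in the bulk becomes $O(q_0(2))$, independent of $\eta$, yielding $c_1=c_1(n)$. (A minor slip elsewhere: the corridor width should be $\Theta(\delta H_\lambda)$ unscaled, i.e.\ $\Theta(\delta)$ in rescaled units, not $\delta h_\lambda$; you state this correctly later, so I take the earlier line as a typo.)
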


\begin{proof}
	Fix $\epsilon>0$ so that $n\epsilon < 1$, and assume $T>2$. We write $\hat{\mathbf{P}}^{\underline{w}}_{\lambda}$ for the law of $n$ independent rescaled random walks started at $\underline{w}$, and $\hat{\mathbf{P}}^{\underline{w}}_{t,+,\lambda}$ for this law with trajectories restricted to stay in $\mathbb{A}_{n,\lambda}^+$ in the interval $[0,t]$. Write $\mathbb{A}_{n,\lambda}^{+,\mathsf{r}}(\alpha) = \mathbb{A}_{n,\lambda}^{+,\mathsf{r}}\cap\{\underline{x} : x_1\leq\alpha\}$, and define the event
	\begin{align*}
		\mathcal{E} = \left\{\max_{t\in[0,1]} x_1^\lambda(t) \leq 2\eta, \, \underline{x}^\lambda(1) \in \mathbb{A}_{n,\lambda}^{+,\mathsf{r}}(1),\max_{t\in[1,2T]} x_1^\lambda(t) < 2\right\}.
	\end{align*}
	By \eqref{q0}, we have
	\begin{equation}\label{Zbd1}
	Z^{\underline{w}}_{T,+,\lambda} \geq e^{-\mathfrak{ab}^{n+1} (q_0(2\eta) + (2T-1)q_0(2))} \hat{\mathbf{P}}^{\underline{w}}_{2T,+,\lambda} (\mathcal{E}).
	\end{equation}
	To bound this probability, we use the Markov property to compute
	\begin{align*}
		\hat{\mathbf{P}}^{\underline{w}}_{2T,+,\lambda} (\mathcal{E}) 
		&= \sum_{\underline{u}\in\mathbb{A}_{n,\lambda}^{+,\mathsf{r}}(1)} \hat{\mathbf{P}}^{\underline{w}}_{2T,+,\lambda} \left(\max_{t\in[0,1]} x_1^\lambda(t) \leq 2\eta, \max_{t\in[1,2T]} x_1^\lambda(t) < 2 \, \bigg| \, \underline{x}^\lambda(1) = \underline{u}\right)\\
		&\quad\quad\quad\quad\quad \quad \times \hat{\mathbf{P}}^{\underline{w}}_{2T,+,\lambda} (\underline{x}^\lambda(1) = \underline{u}) \\
		&= \sum_{\underline{u}\in \mathbb{A}_{n,\lambda}^{+,\mathsf{r}}(1)} \hat{\mathbf{P}}^{\underline{w}}_{1,+,\lambda} \left(\max_{t\in[0,1]} x_1^\lambda(t) \leq 2\eta\, \bigg| \, \underline{x}^\lambda(1) = \underline{u}\right) \hat{\mathbf{P}}^{\underline{u}}_{2T-1,+,\lambda}\left(\max_{t\in[0,2T-1]} x_1^\lambda(t) < 2\right)\\
		&\quad\quad\quad\quad\quad \quad \times \hat{\mathbf{P}}^{\underline{w}}_{1,+,\lambda} (\underline{x}^\lambda(1) = \underline{u}) \\
		&\geq \hat{\mathbf{P}}^{\underline{w}}_{1,+,\lambda} \left(\max_{t\in[0,1]} x_1^\lambda(t)\leq 2\eta, \, \underline{x}^\lambda(1)\in\mathbb{A}_{n,\lambda}^{+,\mathsf{r}}(1)\right) \\
		&\qquad\qquad\qquad\times \min_{u\in\mathbb{A}_{n,\lambda}^{+,\mathsf{r}}(1)} \hat{\mathbf{P}}^{\underline{u}}_{2T-1,+,\lambda}\left(\max_{t\in[0,2T-1]} x_1^\lambda(t) < 2\right).
	\end{align*}
	By \cite[Section 7.2, (\textbf{I.2}), (\textbf{I.3})]{IVW}, there is a constant $\rho>0$ so that the probability in the second to last line is bounded below by $\rho^2$ uniformly in $\underline{w}\in\mathbb{A}_{n,\lambda}^+$ with $w_1\leq\eta$ and $\lambda$ small. The probability in the last line can be bounded below by the probability that $n$ walks starting at $u_1,\dots,u_n$ stay within horizontal tubes of width $\epsilon/4$ centered at $u_1,\dots,u_n$ on the whole interval. Splitting the walks into time blocks of length $c_3\epsilon^2$, the probability that each walk stays within the corresponding tube on each time block is bounded below for small $\lambda$ by some constant $p>0$, and this gives a total lower bound of $p^{c_4 T/\epsilon^2} \geq e^{-c_5(\epsilon)T}$. Combining with \eqref{Zbd1}, we get
	\[
	Z^{\underline{w}}_{T,+,\lambda} \geq e^{-\mathfrak{a}\mathfrak{b}^{n+1}(q_0(2\eta) + (2T-1)q_0(2)) }\cdot\rho^2 e^{-c_5(\epsilon)T} \geq c_1(n) e^{-c_2(n,\eta)T}
	\]
	as desired.
	
\end{proof}

Now we finish the proof of \eqref{pregood}. Recall that $3M = a\wedge b$, so without loss of generality we may assume $a = 3M$ and $b\geq a$. In fact it suffices to consider $a,b\leq 3M$ with high probability. Indeed, as in \cite[(7.25)]{IVW}, define the random variables $B_{\pm}\geq 0$ by
\[
-2M - B_- = \max\{t\leq -2M : y_1^\lambda(t)\leq \eta\}, \quad 2M+B_+ = \min\{t\geq 2M : y_1^\lambda(t)\leq\eta\}.
\]
Then the same argument using the Gibbs property as in \cite[(7.28)]{IVW} shows that for any $b_{\pm}\geq 0$,
\begin{align*}
	\mathbb{P}^{\underline{r}}_{b,+,\lambda}(B_{\pm} = b_{\pm}) \leq c_6(\epsilon)e^{c_7(\epsilon)M - (b_+ + b_-) q_0(\eta)}.
\end{align*}
Therefore fixing $\eta$ large enough so that $q_0(\eta) > 2c_7(\epsilon)$, we can ignore the case where $b_{\pm} > M$. 

Now by the Gibbs property (conditioning on $A$, $B_{\pm}$, and the values of $\underline{x}^\lambda$ and $\underline{y}^\lambda$ at these times), it suffices to prove \eqref{pregood} in the case $a = -3M$ and $b_1,b_2\in[2M,3M]$ in place of $b$. For this we observe that if $\nu^{(5)} < 1/5$ for example, then Lemma \ref{partition} implies
\begin{align*}
	\mathbb{P}^{\underline{r}}_{a,+,\lambda}\otimes\mathbb{P}^{\underline{s}}_{b_1,b_2,+,\lambda} (\mathcal{M}_0^{(5)} \leq \nu^{(5)}M) &\leq e^{-q_0(\eta)M/10}\cdot\frac{1}{{Z}^{\underline{r}}_{6M,+,\lambda} {Z}^{\underline{u}}_{b_1+b_2,+,\lambda}}\\
	&\leq e^{-q_0(\eta)M/10} \cdot c_2(n,\eta)^{-2}e^{24 c_1(n)M}.
\end{align*}
Enlarging $\eta$ if necessary so that $q_0(\eta)/10 > 25c_1(n)$ and then taking $M$ large enough depending on $c_1(n)$ and $c_2(n,\eta)$, we obtain a constant $\kappa^{(5)}$ for which \eqref{pregood} holds. This completes the proof.
	
\bibliographystyle{plain}
\bibliography{bib}
	
\end{document}